\newtheorem{theorem}{Theorem}[section]
\newtheorem{lemma}[theorem]{Lemma}
\newtheorem{defi}[theorem]{Definition}
\newtheorem{prop}[theorem]{Proposition}
\newtheorem{coro}[theorem]{Corollary}
\begin{document} 
\title{INTERIOR OPERATORS AND TOPOLOGICAL CATEGORIES}
\author{Joaqu\'in Luna-Torres $^{\lowercase{a}}$\,\ and \,\ Carlos Orlando Ochoa C.$^{\lowercase{b}}$ }
\dedicatory{$^a$ Universidad Sergio Arboleda \\
$^b$  Universidad Distrital Francisco Jose de Caldas }
\email{$^a$oochoac@udistrital.edu.co; camicy@etb.net.co}
\email{$^b$ jlunator.gmail.com}
\subjclass[2010]{06A15, 18B35, 54A05, 54B30}
\keywords{Interior operator, concrete category, topological category, Kuratowski interior operator, Grothendieck interior operator, Grothendieck topos, $LF$-topological spaces}
\begin{abstract}
The introduction of the categorical notion of closure operators  has unified various important notions and has led to interesting examples  and applications in diverse areas of mathematics (see for example, Dikranjan and Tholen (\cite{DT})). For a topological space it is well-known that the associated closure and interior operators provide equivalent descriptions of the topology, but this is not true in general. So, it makes sense to define and study the notion of interior operators $I$ in the context of a category $\mathfrak C$ and a fixed class $\mathcal M$ of monomorphisms in $\mathfrak C$ closed under composition in such a way that $\mathfrak C$ is finitely $\mathcal M$-complete and the inverse images of morphisms have both left and  right adjoint, which is the purpose of this paper.\\
Then we construct a concrete category  $\mathfrak C_{I}$ over $\mathfrak C$ which is a topological category. Furthermore, we provide some examples and discuss some of their properties: Kuratowski interior operator, Grothendieck interior operator, interior operators on Grothendieck topos and interior operators on the category of fuzzy topological spaces.
\end{abstract}
\maketitle 
\baselineskip=1.7\baselineskip
\section*{0. Introduction}
Kuratowski operators (closure, interior, exterior, boundary and others) have been used intensively in General Topology (\cite{K1}, \cite{K2}). Category Theory provides a variety of notions which expand on the lattice-theoretic concept of interior operator (\cite{Du}): For a topological space it is well-known that the associated closure and interior operators provide equivalent descriptions of the topology; but it is not generally true in other categories, consequently it makes sense to define and study the notion of interior operators $I$ in the context of a category $\mathfrak C$ and a fixed class $\mathcal M$ of monomorphisms in $\mathfrak C$ closed under composition in such a way that $\mathfrak C$ is finitely $\mathcal M$-complete and the inverse images of morphisms have both left and  right adjoint.

The paper is organized as follows: Following (\cite{DT}) we introduce, in section $1$, the basic categorial framework on subobjects, inveres images  and image factorization as needed throughout the paper. In section $2$, we present the concept of interior operator $I$ for suitable categories and then we construct a topological category $(\mathfrak C_I,U)$. Finally in section $3$ we provide some examples and discuss some of their properties: Kuratowski interior operator, Grothendieck interior operator, interior operators on Grothendieck topos and interior operators on the category of fuzzy topological spaces.

\section{Preliminaries on Subobjects, Inverse Images and its adjoints}
In this section we provide the basic categorial framework on subobjects, inveres images  and image factorization as needed throughout the paper.
\subsection{$\mathcal M$-subobjects}
In order to allow for sufficient flexibility, as in Dikrajan and Tholen \cite{DT}, we consider a category $\mathfrak C$ and a fixed class $\mathcal M$ of monomorphisms in $\mathfrak C$ which will play the role of subobjects. We assume that
\begin{itemize}
\item $\mathcal M$ is closed under composition with isomorphisms.
\item $\mathcal M$ contains al identity morphisms.
\end{itemize}
For every object $X$ in $\mathfrak C$, let $\mathcal M/X$ the class of all $\mathcal M$-morphisms with codomain $X$; the  relation given by 
\[
m\leqslant n \Leftrightarrow (\exists j)\,\  n\circ j=m
\]

\begin{equation}
\begin{tikzpicture}
\matrix(a)[matrix of math nodes,
row sep=3em, column sep=2.5em,
text height=1.5ex, text depth=0.25ex]
{M& &N\\
&X\\};
\path[->,font=\scriptsize](a-1-1) edge node[above]{$j$} (a-1-3);
\path[->,font=\scriptsize](a-1-1)edge node[below left]{$m$} (a-2-2);
\path[->,font=\scriptsize](a-1-3) edge node[below right]{$n$} (a-2-2);
\end{tikzpicture}
\end{equation}
is reflexive and transitive, hence $\mathcal M/X$ is a preordered class. Since $n$ is monic, the morphism $j$ is uniquely determined, and it is an isomorphism of $\mathfrak C$ if and only if $n\leqslant m$ holds; in this case $m$ and $n$ are isomorphic, and we write $m\cong n$. Of course, $\cong$ is an equivalence relation, and $\mathcal M/X$ modulo $\cong$ is a partially ordered class for which we can use all lattice-theoretic terminology. If $\widehat{m}$ denotes de $\cong$-equivalence class of $m$, we have, in particular, the equivalence
\[
m\cong n \Leftrightarrow \widehat{m} =\widehat{n}.
\]
From now on $\widehat{\mathcal M/X}$ denotes the partially orderd class $\mathcal M/X$ modulo $\cong$, and $m$ denotes the class $\widehat{m}$.
\subsection{Inverse images are $\mathcal M$-pullbacks}
For our fixed class $\mathcal M$ of mono\-morphisms in the category $\mathfrak C$, we say that $\mathfrak C$ has $\mathcal M$-pullbacks if, for every morphism $F:X\rightarrow Y$ and $n\in \widehat{\mathcal M/Y}$, a pullback diagram
\begin{equation}\label {pb}
\begin{tikzpicture}
\matrix(a)[matrix of math nodes,
row sep=3em, column sep=2.5em,
text height=1.5ex, text depth=0.25ex]
{M& N\\
X &Y\\};
\path[->,font=\scriptsize](a-1-1) edge node[above]{$f^{'}$} (a-1-2);
\path[->,font=\scriptsize](a-1-1)edge node[ left]{$m$} (a-2-1);
\path[->,font=\scriptsize](a-1-2) edge node[ right]{$n$} (a-2-2);
\path[->,font=\scriptsize](a-2-1) edge node[ above]{$f$} (a-2-2);
\end{tikzpicture}
\end{equation}
exists, with $m\in \widehat{\mathcal M/X}$. Of course, as an $\mathcal M$-subobject of $\mathfrak C$, $m$ is uniquely determined; it is called {\bf the inverse image of $n$ under $f$} and denoted by $f^{-1}(n):f^{-1}(N)\rightarrow X$. The pullback property of (\ref{pb}) yields that
\[
f^{-1} (-):\widehat{\mathcal M/Y}\rightarrow \widehat{\mathcal M/X}
\]
is an order-preserving map so that 
\[
k\leqslant n\Rightarrow f^{-1}(k)\leqslant f^{-1}(n)
\]
holds.
\begin{equation}
\begin{tikzpicture}
\matrix(a)[matrix of math nodes,
row sep=5em, column sep=0.8em,
text height=1.5ex, text depth=0.25ex]
{& f^{-1}(N)\ & &\ N&\\
f[{-1}(K)& & K& \\
X & & Y& \\};
\path[->,font=\scriptsize](a-2-1) edge node[left]{$f^{-1}(k)$} (a-3-1);
\path[->,font=\scriptsize](a-2-3)edge node[left]{$k$} (a-3-3);
\path[->,font=\scriptsize](a-1-4) edge node[right]{$n$} (a-3-3);
\path[->,font=\scriptsize](a-2-3) edge node[below]{$j$} (a-1-4);
\path[->,font=\scriptsize](a-1-2) edge  (a-1-4);
\path[->,font=\scriptsize](a-2-1) edge  (a-2-3);
\path[->,font=\scriptsize](a-2-1) edge  (a-1-2);
\path[->,font=\scriptsize](a-1-2) edge node[below right]{$f^{-1}(n)$} (a-3-1);
\path[->,font=\scriptsize](a-3-1) edge node[below]{$f$} (a-3-3);
\end{tikzpicture}
\end{equation}
\subsection{When the subobjects form a large-complete lattice}
If $\mathfrak C$ has $\mathcal M$-pullbacks and if $\mathcal M$ is closed under composition, the ordered class $\widehat{\mathcal M/X}$ has binary meets for every object $X$: one obtains the meet 
\[
m\land n:M\land N\rightarrow X
\]
as the diagonal of the pullback diagram
\begin{equation}\label {inf}
\begin{tikzpicture}
\matrix(a)[matrix of math nodes,
row sep=3em, column sep=2.5em,
text height=1.5ex, text depth=0.25ex]
{M\land N& N\\
M &X\\};
\path[->,font=\scriptsize](a-1-1) edge (a-1-2);
\path[->,font=\scriptsize](a-1-1)edge (a-2-1);
\path[->,font=\scriptsize](a-1-2) edge node[ right]{$n$} (a-2-2);
\path[->,font=\scriptsize](a-2-1) edge node[ above]{$m$} (a-2-2);
\end{tikzpicture}
\end{equation}
In general, for any $\mathcal M$, we say that $\mathfrak C$ has {\bf $\mathcal M$-intersections} if for every family $(m_i)_{\text{\tiny{$i\in I$}}}$ in $\widehat{\mathcal M/X}$ ($I$ may be a proper class or empty) a multiple pullback diagram
\begin{equation}
\begin{tikzpicture}
\matrix(a)[matrix of math nodes,
row sep=3em, column sep=2.5em,
text height=1.5ex, text depth=0.25ex]
{&M_i& \\
M & &X\\};
\path[->,font=\scriptsize](a-1-2) edge node[above right]{$m_i$} (a-2-3);
\path[->,font=\scriptsize](a-2-1) edge node[above]{$m$} (a-2-3);
\path[->,font=\scriptsize](a-1-2) edge node[above left]{$j_i$} (a-2-1);
\end{tikzpicture}
\end{equation}
exists in $\mathfrak C$ with $m\in \widehat{\mathcal M/X}$. One easily verifies that $m$ indeed assumes the role of the meet of $(m_i)_{\text{\tiny{$i\in I$}}}$ in $\widehat{\mathcal M/X}$. Hence we writes
\[
m=\bigwedge_{i\in I}m_i: \bigwedge_{i\in I}M_i\rightarrow X.
\]
\begin{prop}
If $\mathfrak C$ has  $\mathcal M$-intersections then every ordered class  $\widehat{\mathcal M/X}$ has the structure of a large-complete lattice, i.e. class-indexed meets and joins exist in $\widehat{\mathcal M/X}$, for every object $X\in \mathfrak C$.
\end{prop}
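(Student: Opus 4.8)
The plan is to establish the two halves of the statement separately, deriving the existence of all joins from that of all meets by a purely order-theoretic argument, so that the only categorical work is the verification that $\mathcal M$-intersections compute meets in $\widehat{\mathcal M/X}$.

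For meets: given any family $(m_i)_{i\in I}$ in $\widehat{\mathcal M/X}$, the hypothesis that $\mathfrak C$ has $\mathcal M$-intersections provides a multiple pullback with vertex $M=\bigwedge_{i\in I}M_i$ and an $\mathcal M$-morphism $m\colon M\to X$ together with the legs $j_i\colon M\to M_i$. I would then verify --- this being the one genuine diagram chase of the proof --- that $m$ is the infimum of $(m_i)_{i\in I}$ in the preordered class $\mathcal M/X$. Indeed $m\leqslant m_i$ for each $i$ via $j_i$ (since $m_i\circ j_i=m$); and if $p\colon P\to X$ lies in $\mathcal M/X$ with $p\leqslant m_i$ for all $i$, the witnessing factorizations $m_i\circ k_i=p$ induce, by the universal property of the multiple pullback, a unique $k\colon P\to M$ with $j_i\circ k=k_i$ and hence $m\circ k=p$, so $p\leqslant m$. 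Passing to $\cong$-classes gives $m=\bigwedge_{i\in I}m_i$ in $\widehat{\mathcal M/X}$. Taking $I=\varnothing$ yields a top element of $\widehat{\mathcal M/X}$, which one checks is the class of $1_X$ (admissible since $\mathcal M$ contains all identity morphisms).

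For joins: here I would invoke the standard fact that a meet-complete ordered class with a top element is automatically a complete lattice. Concretely, given $(m_i)_{i\in I}$, set $\mathcal U=\{\,n\in\widehat{\mathcal M/X}\;:\;m_i\leqslant n\ \text{for all}\ i\in I\,\}$, the class of upper bounds; it is nonempty because it contains the top element $1_X$. Since $\mathcal M$-intersections exist for arbitrary, possibly proper-class-indexed, families, $s:=\bigwedge\mathcal U$ exists in $\widehat{\mathcal M/X}$. Each $m_i$ is a lower bound of $\mathcal U$, hence $m_i\leqslant s$, so $s\in\mathcal U$; and $s\leqslant n$ for every $n\in\mathcal U$ by construction of $s$ as a meet. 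Thus $s$ is the least upper bound, i.e. $s=\bigvee_{i\in I}m_i$. The case $I=\varnothing$ recovers the bottom element $\bigwedge\widehat{\mathcal M/X}$.

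The only real subtlety is size bookkeeping: the class $\mathcal U$ of upper bounds may well be a proper class, so the join construction relies essentially on the hypothesis that the index class in the definition of $\mathcal M$-intersections is allowed to be proper; without that the argument for joins would not even get off the ground. Everything else is a routine translation between the universal property of (multiple) pullbacks and the order $\leqslant$ on $\widehat{\mathcal M/X}$, and the main --- very mild --- obstacle is simply carrying out the meet diagram chase cleanly while keeping track of the uniqueness afforded by the $m_i$ being monomorphisms.
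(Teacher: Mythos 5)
Your proposal is correct and follows the same route as the paper: the paper's (one-line) proof likewise takes the meet of a family to be given by the multiple pullback of an $\mathcal M$-intersection and then constructs the join as the meet of the class of all upper bounds, relying on the fact that the index class in the definition of $\mathcal M$-intersections may be proper. You have merely filled in the diagram chase for the meet, which the paper dismisses with ``one easily verifies'' in the paragraph preceding the proposition.
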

\begin{proof}
As usual, we construct the join of $(m_i)_{\text{\tiny{$i\in I$}}}$ in $\widehat{\mathcal M/X}$ as the meet of all upper bounds of $(m_i)_{\text{\tiny{$i\in I$}}}$ in $\widehat{\mathcal M/X}$.
\end{proof}
If $\mathfrak C$ has  also $\mathcal M$-pullbacks, it is easy to see that the join $m\in \widehat{\mathcal M/X}$ of $(m_i)_{\text{\tiny{$i\in I$}}}$ has the following categorical property: there are morphisms $j_i$, $ i\in I$, such that
\begin{enumerate}
\item $m\centerdot j_i=m_i$, for all $ i\in I$;
\item whenever we have commutative diagrams
\begin{equation}\label{union}
\begin{tikzpicture}
\matrix(a)[matrix of math nodes,
row sep=3em, column sep=3.5em,
text height=1.5ex, text depth=0.25ex]
{M_i&N \\
M& \\
X &Z\\};
\path[->,font=\scriptsize](a-1-1) edge node[above]{$u_i$} (a-1-2);
\path[->,font=\scriptsize](a-1-1) edge node[left]{$j_i$} (a-2-1);
\path[->,font=\scriptsize](a-2-1) edge node[left]{$m$} (a-3-1);
\path[->,font=\scriptsize](a-1-2) edge node[right]{$n$} (a-3-2);
\path[->,font=\scriptsize](a-3-1) edge node[above]{$v$} (a-3-2);
\end{tikzpicture}
\end{equation}
in $\mathfrak C$ with $m\in \mathcal M$, then there is a uniquely determined morphism $\omega: M\rightarrow N$ with $n\centerdot \omega=v\centerdot m$, and $\omega\centerdot j_i=u_i$, for all $ i\in I$.
\end{enumerate}
A subobject $m\in \widehat{\mathcal M/X}$ is called an {\bf $\mathcal M$-union} of $(m_i)_{\text{\tiny{$i\in I$}}}$ if this categorical property holds. 
Letting $v=1_X$ in (\ref{union}) we see that unions are joins in $\widehat{\mathcal M/X}$, hence we writes
\[
m=\bigvee_{i\in I}m_i: \bigvee_{i\in I}M_i\rightarrow X.
\]
When $I=\emptyset$, the union $\bigvee_{i\in I}m_i$ (if it exists) is called the {\bf trivial $\mathcal M$-subobject of $X$}; it is the least element of $\widehat{\mathcal M/X}$ and therefore denoted by $o_{\text{\tiny{$X$}}}: O_X\rightarrow X$.\\
Its characteristic categorical property (c.f. Diagram (\ref{union})) reads as follows: for every diagram
\begin{equation}\label {trivial}
\begin{tikzpicture}
\matrix(a)[matrix of math nodes,
row sep=3em, column sep=2.5em,
text height=1.5ex, text depth=0.25ex]
{O_X& N\\
X &Z\\};
\path[->,font=\scriptsize](a-1-1) edge node[left]{$o_{\text{\tiny{$X$}}}$} (a-2-1);
\path[->,font=\scriptsize](a-1-2) edge node[ right]{$n$} (a-2-2);
\path[->,font=\scriptsize](a-2-1) edge node[ above]{$v$} (a-2-2);
\end{tikzpicture}
\end{equation}
with $n\in \mathcal M$ there is a unequely determined morphism $\omega: O_X\rightarrow N$ with $n\centerdot \omega=v\centerdot o_{\text{\tiny{$X$}}}$.\\
Note that if the category $\mathfrak C$ has initial object $I$, then $o_{\text{\tiny{$X$}}}$ is the $\mathcal M$-part of the right $\mathcal M$-factorization of the only morphism $I\rightarrow X$. This is equivalent to the existence of  ``solution-set conditions''  (c. f. \cite{PJ}, I.4 or \cite{SM}, V.6)
\subsection{Review of pairs of adjoint maps}
Images of subobjects are given by left-adjoints of the maps $f^{-1} (-)$. We remember that a pair of mappings $\phi:P\rightarrow Q$ and $\psi:Q\rightarrow P$ between preordered classes $P, Q$ are adjoint if 
\begin{equation}\label{Gal}
\phi(m)\leqslant n \Leftrightarrow m\leqslant \psi(n)
\end{equation}
holds for all $m\in P$ and $n\in Q$,\,\ in which case one says that $\phi$ is left-adjoint of $\psi$ or $\psi$ is right-adjoint of $\phi$ and we writes $\phi \vdash\psi$. Note that adjoints determine each other uniquely, up to the equivalence relation given by $(x\cong y\Leftrightarrow x\leqslant y\,\ \text{and}\,\ y\leqslant x)$. In other words, in ordered classes adjoints determine each other uniquely.
\begin{lemma}\label{adj}
The following assertions are equivalent for any pair of mappings $\phi:P\rightarrow Q$ and $\psi:Q\rightarrow P$ between  large-complete lattices: 
\begin{enumerate}
\item[(i)] $\phi \vdash\psi$;
\item[(ii)] $\phi$ is order-preserving, and $\phi(m) = \bigwedge\{ n\in Q\mid m\leqslant \psi(n)\}$ holds for all $m\in P$;
\item[(iii)] $\psi$ is order-preserving, and $\psi(n) = \bigvee\{ m\in P\mid \phi(m)\leqslant n\}$ holds for all $n\in Q$;
\item[(iv)] $\phi$ and $\psi$ are order-preserving, and 
$$m\leqslant \psi(\phi(m))\,\,\ \text{and}\,\,\  \phi(\psi(n))\leqslant n$$
 holds for all $m\in P$ and $n\in Q$.
\end{enumerate}
\end{lemma}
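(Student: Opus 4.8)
The plan is to run a cycle of implications with the equivalence $(i)\Leftrightarrow(iv)$ as its computational core. Concretely I would prove $(i)\Rightarrow(iv)$, $(iv)\Rightarrow(i)$, $(i)\Rightarrow(ii)$ and $(i)\Rightarrow(iii)$, and then close the loop with $(ii)\Rightarrow(i)$ and $(iii)\Rightarrow(i)$; since $(ii)$ and $(iii)$ are order-dual to one another (interchange $P\leftrightarrow Q$ and $\phi\leftrightarrow\psi$, reverse the order, and swap meets with joins), only one statement from each of the last two pairs needs to be treated in detail, the other following by dualization.

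For $(i)\Rightarrow(iv)$ I would merely specialize the adjointness relation~(\ref{Gal}). For monotonicity of $\phi$: if $m\leqslant m'$ in $P$, then from $\phi(m')\leqslant\phi(m')$ and~(\ref{Gal}) we get $m'\leqslant\psi(\phi(m'))$, hence $m\leqslant\psi(\phi(m'))$, hence $\phi(m)\leqslant\phi(m')$ again by~(\ref{Gal}); monotonicity of $\psi$ is dual. Putting $n=\phi(m)$ in~(\ref{Gal}) yields the unit inequality $m\leqslant\psi(\phi(m))$, and putting $m=\psi(n)$ yields the counit inequality $\phi(\psi(n))\leqslant n$. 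For $(iv)\Rightarrow(i)$: if $\phi(m)\leqslant n$, apply the monotone $\psi$ and then the unit to obtain $m\leqslant\psi(\phi(m))\leqslant\psi(n)$; if $m\leqslant\psi(n)$, apply the monotone $\phi$ and then the counit to obtain $\phi(m)\leqslant\phi(\psi(n))\leqslant n$.

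For $(i)\Rightarrow(ii)$, and dually for $(i)\Rightarrow(iii)$: $\phi$ is monotone by the previous step, and for the meet formula observe that every $n\in Q$ with $m\leqslant\psi(n)$ satisfies $\phi(m)\leqslant n$ by~(\ref{Gal}), so $\phi(m)$ is a lower bound of $\{\,n\in Q\mid m\leqslant\psi(n)\,\}$, while $\phi(m)$ itself belongs to this set by the unit inequality; a lower bound of a set that also lies in the set is its meet, which exists here by large-completeness of $Q$. The join formula of $(iii)$ is the order-dual assertion and uses large-completeness of $P$.

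The step I expect to be the main obstacle is closing the cycle, that is $(ii)\Rightarrow(i)$ and its dual $(iii)\Rightarrow(i)$, since one must convert the pointwise meet formula back into full adjointness. Granting $\phi(m)=\bigwedge\{\,n\in Q\mid m\leqslant\psi(n)\,\}$, the implication from $m\leqslant\psi(n)$ to $\phi(m)\leqslant n$ is immediate because $n$ then lies in the indexing set, and the counit inequality $\phi(\psi(n))\leqslant n$ also falls out by evaluating the formula at $\psi(n)$ and observing that $n$ belongs to the corresponding set. The delicate direction is the reverse implication, from $\phi(m)\leqslant n$ to $m\leqslant\psi(n)$: here I would try to extract monotonicity of $\psi$ and the unit inequality $m\leqslant\psi(\phi(m))$ directly from the formula, so as to land in $(iv)$ and then invoke $(iv)\Rightarrow(i)$; this is precisely where I expect to need the full strength of the large-completeness hypotheses on $P$ and $Q$, and where the careful bookkeeping would be concentrated.
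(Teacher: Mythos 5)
Everything you actually carry out is correct and is essentially the paper's argument: $(i)\Rightarrow(iv)\Rightarrow(i)$ by specializing the adjointness relation, and $(i)\Rightarrow(ii)$ by observing that the unit inequality places $\phi(m)$ inside the set $Q_m=\{n\in Q\mid m\leqslant\psi(n)\}$, of which it is also a lower bound. The paper merely organizes the cycle differently (it proves $(i)\Rightarrow(ii)$ and $(iii)$, then $(ii)\Rightarrow(iv)$, $(iii)\Rightarrow(iv)$, $(iv)\Rightarrow(i)$), which is cosmetic.

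The genuine gap is exactly the step you defer --- recovering adjointness from the meet formula --- and it cannot be closed as the statement is literally written. From $\phi(m)=\bigwedge Q_m$ alone you cannot extract the unit inequality $m\leqslant\psi(\phi(m))$: that inequality says precisely that the infimum is attained, i.e.\ that $\phi(m)\in Q_m$, and an infimum need not belong to the set it bounds. Concretely, take $P=Q=\{0<1\}$, let $\psi$ be the constant map $0$ and $\phi$ the identity; then $\phi$ is order-preserving and $\phi(m)=\bigwedge Q_m$ for both $m$ (note $Q_1=\emptyset$, so $\bigwedge Q_1=1$), yet $\phi(1)\leqslant 1$ while $1\not\leqslant\psi(1)=0$, so $(ii)$ holds and $(i)$ fails. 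The paper's own proof of $(ii)\Rightarrow(iv)$ hides this by asserting ``since $\phi(m)\in Q_m$'' without justification; that assertion is available only if $(ii)$ is read, as in Dikranjan--Tholen's original formulation, with $\min$ in place of $\bigwedge$ (equivalently, if one adds to $(ii)$ that $\psi$ is order-preserving and preserves meets, which gives $\psi(\bigwedge Q_m)=\bigwedge_{n\in Q_m}\psi(n)\geqslant m$ and hence $\phi(m)\in Q_m$). So your instinct about where the difficulty is concentrated is exactly right, but the plan of ``extracting monotonicity of $\psi$ and the unit directly from the formula'' fails; you must strengthen $(ii)$ (and dually $(iii)$) to a minimum/maximum, or add meet/join preservation, before the cycle closes.
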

\begin{proof}
\,\ $(i)\rightarrow (ii)\ \&\ (iii)$\,\,\,\,\ Putting $n=\phi(m)$ in (\ref{Gal}), we obtain\linebreak  $m\leqslant  \psi(\phi(m))$, hence $\phi(m)\in Q_m$,, where $Q_m =\{n\in Q\mid m\leqslant \psi(n)\}$, Furthermore, for all $n\in Q_m$ (\ref{Gal}) yields $\phi(m)\leqslant n$, hence $\phi(m)=\bigwedge Q_m$. This formula implies that $\phi$ is order-preserving. Dually we obtain the formula for $\psi$ as given in $(iii)$, and that $\psi$ is order-preserving.\\
\,\ $(ii)\rightarrow (iv)$\,\,\,\,\ As mentioned before, the given formula for $\phi$ implies its monotonicity. Furthermore, since $\phi(m)\in Q_m$, we have $m\leqslant \psi(\phi(m))$, and since $n\in Q_{\psi(n)}$, we have $\phi(\psi(n))\leqslant n$ for all $m\in P$ and $n\in Q$.\\
\,\ $(iii)\rightarrow (iv)$\,\,\,\,\ follows dually.\\
\,\ $(iv)\rightarrow (i)$\,\,\,\,\ $m\leqslant \psi(n)$ implies $\phi(m)\leqslant \phi(\psi(n))\leqslant n$, and $\phi(m)\leqslant n $ implies $m\leqslant  \psi(\phi(m))\leqslant \psi(n)$.
\end{proof}
The most important property of adjoints pairs is the preservation of joins and meets
\begin{prop}\label{preserve}
If $\phi \vdash\psi$, where $\phi:P\rightarrow Q$ and $\psi:Q\rightarrow P$  are mappings between  large-complete lattices, then $\phi$ preserves all joins and $\psi$ preserves all meets. Hence we have the formulas
\[
\phi \left( \bigvee_{i\in I}m_i\right)= \bigvee_{i\in I}\phi \left(m_i\right)\,\,\  \text{and}\,\,\ \psi\left(\bigwedge_{i\in I}n_i\right)=\bigwedge_{i\in I}\psi\left(n_i\right).
\]
Furthermore, $\phi\centerdot \psi\centerdot \phi=\phi$ and $\psi\centerdot \phi\centerdot \psi=\psi$, so that $\phi$ and $\psi$ give a biyective correspondence between $\phi(P)$ and $\psi(Q)$.
\end{prop}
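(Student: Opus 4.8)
The plan is to deduce everything from the equivalent characterizations of adjointness collected in Lemma~\ref{adj}, especially the triangular inequalities in part~(iv). First I would show that $\phi$ preserves arbitrary joins. Fix a family $(m_i)_{\text{\tiny{$i\in I$}}}$ in $P$ and set $j=\bigvee_{i\in I}m_i$. For an arbitrary $n\in Q$, I would chase the chain of equivalences
\[
\phi(j)\leqslant n \ \Leftrightarrow\ j\leqslant \psi(n)\ \Leftrightarrow\ (\forall i\in I)\ m_i\leqslant \psi(n)\ \Leftrightarrow\ (\forall i\in I)\ \phi(m_i)\leqslant n\ \Leftrightarrow\ \bigvee_{i\in I}\phi(m_i)\leqslant n,
\]
where the first and third steps use the adjunction (\ref{Gal}) and the second and fourth use the defining property of joins in the large-complete lattices $P$ and $Q$. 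Since $\phi(j)$ and $\bigvee_{i\in I}\phi(m_i)$ have exactly the same upper bounds in the partially ordered class $Q$, they coincide. The claim that $\psi$ preserves all meets follows by the order-dual argument (with $\psi$ in the role of right adjoint and meets in place of joins).

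Next I would establish the triangular identities. Applying the order-preserving map $\phi$ to the unit inequality $m\leqslant\psi(\phi(m))$ from Lemma~\ref{adj}(iv) gives $\phi(m)\leqslant\phi(\psi(\phi(m)))$; on the other hand, specializing the counit inequality $\phi(\psi(n))\leqslant n$ to $n=\phi(m)$ gives $\phi(\psi(\phi(m)))\leqslant\phi(m)$. Antisymmetry in the ordered class then yields $\phi\centerdot\psi\centerdot\phi=\phi$, and the order-dual computation gives $\psi\centerdot\phi\centerdot\psi=\psi$.

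Finally, for the bijective correspondence: $\phi$ restricts to a map $\psi(Q)\rightarrow\phi(P)$ and $\psi$ restricts to a map $\phi(P)\rightarrow\psi(Q)$, and the identities just proved show these restrictions are mutually inverse — for $x=\psi(n)\in\psi(Q)$ one has $\psi(\phi(x))=\psi\centerdot\phi\centerdot\psi(n)=\psi(n)=x$, and symmetrically $\phi(\psi(y))=y$ for every $y\in\phi(P)$.

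I do not anticipate a genuine obstacle here; the only points that require a little care are (a) that "equal sets of upper bounds forces equality" is being invoked in the partially ordered quotient class rather than in a mere preorder — which is exactly why the earlier passage to $\widehat{\mathcal M/X}$ modulo $\cong$ is needed — and (b) checking that the possibly class-sized index $I$ causes no trouble, which it does not, since the universal properties of joins and meets in a large-complete lattice are by definition quantified over all such families.
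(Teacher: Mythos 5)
Your proposal is correct and follows essentially the same route as the paper's proof: both derive join-preservation by using the adjunction (\ref{Gal}) to identify the upper bounds of $\phi\left(\bigvee_i m_i\right)$ with those of $\bigvee_i\phi(m_i)$ (the paper phrases this as a direct least-upper-bound verification, you as a chain of equivalences), and both obtain $\phi\centerdot\psi\centerdot\phi=\phi$ by applying $\phi$ to the unit inequality and specializing the counit at $n=\phi(m)$, exactly as in Lemma~\ref{adj}(iv). Your explicit check that the restrictions of $\phi$ and $\psi$ are mutually inverse on $\psi(Q)$ and $\phi(P)$ only spells out what the paper leaves implicit.
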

\begin{proof}
By monotonicity of $\phi$,\,\ $\phi(m)$ is an upper bound of $\{\phi(m_i)\mid i\in I\}$, with $m=\bigvee_{i\in I}m_i$. For any other upper bound $n$, we have $m_i\leqslant \psi(n)$ for all $i\in I$ by (\ref{Gal}), hence $m\leqslant \psi(n)$. Application of (\ref{Gal}) again yields $\phi(m)\leqslant n$. This proves that $\phi$ preserves joins. The assertion for $\psi$ follows dually.\\
Furthermore, when applying the order-preserving map $\phi$ to the first inequality of $(iv)$ in the Lemma \ref{adj}, we obtain $\phi(m)\leqslant \phi(\psi(\phi(m))) $, and\linebreak  when exploting the second inequality in case $n=\phi(m)$, we obtain \linebreak $ \phi(\psi(\phi(m))) \leqslant \phi(m)$. Hence $\phi\centerdot \psi\centerdot \phi=\phi$ \ and\,\ $\psi\centerdot \phi\centerdot \psi=\psi$ follows dually.
\end{proof}
The converse of the first statement of Proposition \ref{preserve} holds as well
\begin{theorem}\label{r-adj}
Let $P, \ Q$ be partially ordered sets, then
\begin{enumerate}
\item If an order-preserving  map $\psi:Q\rightarrow P$ has left adjoint $\phi:P\rightarrow Q$, \  $\psi$ preserve all meets which exist in $Q$.
\item If $Q$ has all meets and $\psi$ preserves then, $\psi$ has a left adjoint.
\item If an order-preserving  map  $\phi:P\rightarrow Q$ has right adjoint $\psi:Q\rightarrow P$, \  $\phi$ preserve all joins which exist in $P$.
\item If $P$ has all joins and $\phi$ preserves then, $\phi$ has a right adjoint.
\end{enumerate}
\end{theorem}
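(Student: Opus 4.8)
The plan is to establish (1) directly from the Galois condition (\ref{Gal}), to establish (2) by constructing the left adjoint via an explicit formula, and to deduce (3) and (4) by passing to the opposite posets, under which joins turn into meets and ``left adjoint'' into ``right adjoint'', so that (3) becomes (1) and (4) becomes (2) applied to $P^{\mathrm{op}}$ and $Q^{\mathrm{op}}$.

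For (1) I would take a family $(n_i)_{i\in I}$ in $Q$ whose meet $n=\bigwedge_{i\in I}n_i$ exists. Monotonicity of $\psi$ gives $\psi(n)\leqslant\psi(n_i)$ for all $i$, so $\psi(n)$ is a lower bound of $\{\psi(n_i)\mid i\in I\}$ in $P$. To see it is the greatest one, take an arbitrary lower bound $p$: then $p\leqslant\psi(n_i)$ for each $i$, so (\ref{Gal}) yields $\phi(p)\leqslant n_i$ for each $i$, hence $\phi(p)\leqslant n$, and a second application of (\ref{Gal}) gives $p\leqslant\psi(n)$. Thus $\psi\!\left(\bigwedge_{i\in I}n_i\right)=\bigwedge_{i\in I}\psi(n_i)$; taking $I=\emptyset$ shows in particular that $\psi$ preserves the top element when it exists.

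For (2), since $Q$ has all meets (including the empty one, i.e.\ a top element), I would set, for each $p\in P$,
\[
\phi(p)=\bigwedge\{\,n\in Q\mid p\leqslant\psi(n)\,\}=\bigwedge N_p .
\]
First observe that $\psi$, preserving binary meets, is order-preserving (from $a\leqslant b$ one gets $a=a\wedge b$, so $\psi(a)=\psi(a)\wedge\psi(b)\leqslant\psi(b)$), and that $\phi$ is order-preserving because $p\leqslant p'$ forces $N_{p'}\subseteq N_p$ and hence $\phi(p)\leqslant\phi(p')$. The counit inequality $\phi(\psi(n))\leqslant n$ is immediate, since $\psi(n)\leqslant\psi(n)$ puts $n$ into $N_{\psi(n)}$. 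The unit inequality $p\leqslant\psi(\phi(p))$ is the crux, and here I would use that $\psi$ preserves the (possibly large) meet defining $\phi(p)$, computing
\[
\psi(\phi(p))=\psi\!\left(\bigwedge N_p\right)=\bigwedge_{m\in N_p}\psi(m),
\]
and noting that $p\leqslant\psi(m)$ for every $m\in N_p$ by the very definition of $N_p$, so $p$ is a lower bound of $\{\psi(m)\mid m\in N_p\}$ and therefore $p\leqslant\psi(\phi(p))$. With both inequalities and both monotonicities in hand, the equivalence $\phi(p)\leqslant n\Leftrightarrow p\leqslant\psi(n)$ follows as in $(iv)\Rightarrow(i)$ of Lemma \ref{adj}: if $\phi(p)\leqslant n$ then $p\leqslant\psi(\phi(p))\leqslant\psi(n)$, and if $p\leqslant\psi(n)$ then $n\in N_p$, so $\phi(p)\leqslant n$; hence $\phi\vdash\psi$.

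Statements (3) and (4) I would obtain as the order duals of (1) and (2): replacing $P,Q$ by $P^{\mathrm{op}},Q^{\mathrm{op}}$ turns an order-preserving $\phi\colon P\to Q$ with right adjoint $\psi$ into an order-preserving map $P^{\mathrm{op}}\to Q^{\mathrm{op}}$ with left adjoint, and joins in $P$ into meets in $P^{\mathrm{op}}$, so (1) and (2) apply verbatim; dualizing the formula in (2) shows that the right adjoint in (4) is $\psi(n)=\bigvee\{\,m\in P\mid\phi(m)\leqslant n\,\}$. The only step that is not pure bookkeeping with (\ref{Gal}) is the unit inequality in (2); this is precisely where the hypothesis that $\psi$ preserve \emph{all} meets — rather than merely be monotone — is genuinely used, so I expect it to be the main (indeed essentially the only) obstacle.
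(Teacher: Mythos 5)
Your proposal is correct and follows essentially the same route as the paper: part (1) by the lower-bound argument via the Galois condition, part (2) by defining $\phi(p)=\bigwedge\{q\in Q\mid p\leqslant\psi(q)\}$ and using meet-preservation for the unit inequality, and parts (3) and (4) by dualization. The only differences are cosmetic — you spell out the monotonicity of $\phi$ and the final derivation of the adjunction from the unit/counit inequalities, which the paper leaves implicit.
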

\begin{proof}\

It suffices to show $(1)$ and $(2)$ since $(3)$ and $(4)$ follows by dualization. \\
$(1)$ Let $X$ a subset of $Q$ such that $\bigwedge X$ exists. Since $\psi$ is order-preserving, $\psi\left(\bigwedge X\right)$ is a lower bound of $\{\psi(x)\mid x\in X\}$. But if $p$ is any lower bound for this set, then we have $p\leqslant \psi(x)$ for all $x\in X$, whence $\phi(p)\leqslant x$ for all $x\in X$, so $\phi(p)\leqslant\bigwedge X$ and $p\leqslant \psi(\bigwedge X)$.\\
$(2)$ By definition of an adjoint, $\phi(p)$ most be the smallest $q\in Q$ satisfying $p\leqslant \psi(q)$. So consider $\phi(p)=\bigwedge\{q\in Q\mid p\leqslant \psi(q)\}$. Since $\psi$ preserve  meets, we have 
\[
p\leqslant \bigwedge\{\psi(q)\mid p\leqslant \psi(q)\}= \psi(\phi(p))\,\ \text{and}\,\ \phi(\psi(q))=\bigwedge\{y\mid  \psi(q)\leqslant\psi(y)\}\leqslant q
\]
since $q\in \{y\mid  \psi(q)\leqslant\psi(y)\}$. We can regard these inequalities as natural transformations $id_P \rightarrow \psi\centerdot\phi$ and $\phi\centerdot\psi\rightarrow id_Q$; so $\phi$ is left-adjoint of $\psi$.
\end{proof}
\subsection{Adjointness of image and inverse image}
Let $\mathfrak C$ have $\mathcal M$-pullbacks and for every $f:X\rightarrow Y$ in $\mathfrak C$, let $f^{-1} (-):\widehat{\mathcal M/Y}\rightarrow \widehat{\mathcal M/X}$ have a left adjoint 
\[
f(-):\widehat{\mathcal M/X}\rightarrow\widehat{\mathcal M/Y}.
\]
 For $m: M\rightarrow X$ in $\widehat{\mathcal M/X}$, we call $f(m): f(M)\rightarrow Y$ in $\widehat{\mathcal M/Y}$ the {\bf image} of $m$ under $f$; it is uniquely determined  by the property
 \begin{equation}\label{adjo}
 m\leqslant f^{-1} (n)\Leftrightarrow f(m)\leqslant n
 \end{equation}
 for all $n\in \widehat{\mathcal M/Y}$. Furthermore, (\ref{pb}) yields to the following formulas
 \begin{enumerate}
 \item $m\leqslant k\Rightarrow f(m)\leqslant (k)$;
 \item $m\leqslant f^{-1}(f(m))\,\,\ \text{and}\,\,\ \left( f^{-1}(n)\right)\leqslant n$ ;
 \item $f \left( \bigvee_{i\in I}m_i\right)= \bigvee_{i\in I}f \left(m_i\right)\,\,\  \text{and}\,\,\ f^{-1}\left(\bigwedge_{i\in I}n_i\right)=\bigwedge_{i\in I}f^{-1}\left(n_i\right)$.
 \end{enumerate}
 \begin{prop}
 When $\mathfrak C$ has $\mathcal M$-pullbacks and $(\mathcal E, \mathcal M)$-factorization system for morphisms, we have
 \begin{enumerate}
 \item If $f\in \mathcal M$, then  $f^{-1}(o_{\text{\tiny{$Y$}}})=o_{\text{\tiny{$X$}}}$ (provided the trivial subobject exists);
 \item $f\in \mathcal E$ if and only if f($o_{\text{\tiny{$X$}}})=o_{\text{\tiny{$Y$}}}$;
 \item If $f\in \mathcal M$, then $f^{-1}\left(f(m)\right)=m$ for all $m\in \widehat{\mathcal M/X}$;
 \item If $f\in \mathcal E$ and if $\mathcal E$ is stable under pullbacks, then $f\left(f^{-1}(n)\right)=n$ for all $n\in \widehat{\mathcal M/Y}$.
 \end{enumerate}
 \end{prop}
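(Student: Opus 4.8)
All four parts run on a single mechanism: the image–inverse-image adjunction $f(-)\vdash f^{-1}(-)$ together with the essential uniqueness of $(\mathcal E,\mathcal M)$-factorizations. My first step would be to record that, with $\mathcal M$-pullbacks and an $(\mathcal E,\mathcal M)$-factorization system available, the image $f(m)$ of $m\colon M\to X$ is exactly the $\mathcal M$-part of the $(\mathcal E,\mathcal M)$-factorization of $f\circ m$, and that this operation is left adjoint to $f^{-1}(-)$ (the adjointness (\ref{adjo}) is a routine chase with the universal property of the $\mathcal M$-pullback). Granting this, each of the four equalities is the upgrade of one of the two generic inequalities $m\leqslant f^{-1}(f(m))$ and $f(f^{-1}(n))\leqslant n$ to an equality, and the upgrade is always performed by exhibiting an explicit $(\mathcal E,\mathcal M)$-factorization and invoking its uniqueness. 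It is cleanest to do (3) and (4) first and then read off (1) and (2).

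For (3), assume $f\in\mathcal M$. Then $f\circ m\in\mathcal M$ ($\mathcal M$ is closed under composition), so $f(m)=f\circ m$ in $\widehat{\mathcal M/Y}$ (the image of an $\mathcal M$-subobject along an $\mathcal M$-morphism is the composite, by uniqueness of factorizations). It remains to see that $m\colon M\to X$ is a pullback of $f\circ m\colon M\to Y$ along $f$: the square with projections $1_M\colon M\to M$ and $m\colon M\to X$ out of $M$, lying over $f$ and $f\circ m$, commutes, and for any $a\colon A\to X$ and $b\colon A\to M$ with $f\circ a=(f\circ m)\circ b$ cancelling the monomorphism $f$ gives $a=m\circ b$, so $b$ is the unique mediator. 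Hence $f^{-1}(f(m))=f^{-1}(f\circ m)=m$. Assertion (1) is then immediate: by the defining property of $o_X$ in Diagram (\ref{trivial}), $f\circ o_X$ factors through every $\mathcal M$-subobject of $Y$, in particular through $o_Y$, whence $f(o_X)\leqslant o_Y$; minimality of $o_Y$ forces $f(o_X)=o_Y$, and (3) applied with $m=o_X$ yields $f^{-1}(o_Y)=f^{-1}(f(o_X))=o_X$.

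For (4), assume $f\in\mathcal E$ with $\mathcal E$ stable under pullbacks, and let $n\colon N\to Y$ lie in $\widehat{\mathcal M/Y}$. In the pullback square defining $f^{-1}(n)$ the upper edge $f'\colon f^{-1}(N)\to N$ is the pullback of $f$ along $n$, hence $f'\in\mathcal E$; the square reads $f\circ f^{-1}(n)=n\circ f'$ with $n\in\mathcal M$ and $f'\in\mathcal E$. Thus $n\circ f'$ is an $(\mathcal E,\mathcal M)$-factorization of $f\circ f^{-1}(n)$, and by uniqueness its $\mathcal M$-part $f(f^{-1}(n))$ equals $n$. For (2), factor $f=m\circ e$ with $e\in\mathcal E$ and $m\in\mathcal M$; since $f\circ 1_X=f$, the image of the largest subobject is $f(1_X)=m$. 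If $f\in\mathcal E$ then $f=1_Y\circ f$ is itself an $(\mathcal E,\mathcal M)$-factorization, so $m\cong 1_Y$; conversely if $f(1_X)\cong 1_Y$ then $m$ is an isomorphism, so $f$ differs from $e\in\mathcal E$ by an isomorphism and $f\in\mathcal E$. (The analogous statement with the least subobjects, $f(o_X)=o_Y$, holds for \emph{every} $f$ — compare the proof of (1) — so I read (2) as the assertion about the largest subobjects $1_X,\,1_Y$.)

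I expect none of these steps to be long, and the only hypotheses that are genuinely used are the monicity of $f$ in (3) — hence in (1) — and the pullback-stability of $\mathcal E$ in (4). The latter is the one delicate point: without it the equality $f(f^{-1}(n))=n$ fails in general (only $f(f^{-1}(n))\leqslant n$ persists), so the proof should be arranged so that pullback-stability is visibly exactly what turns $n\circ f'$ into an honest $(\mathcal E,\mathcal M)$-factorization. A minor preliminary is to check that the factorization-image agrees with the left adjoint $f(-)$, so that the two readings of ``$f(m)$'' in the statement coincide.
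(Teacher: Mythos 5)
The paper states this proposition without any proof (it is imported from Dikranjan--Tholen, and the source moves straight on to the next proposition), so there is no in-paper argument to compare yours against; judged on its own, your proof is correct and is essentially the standard one. The reduction of all four items to an explicit $(\mathcal E,\mathcal M)$-factorization plus its essential uniqueness is sound: in (3) the square with edges $1_M$ and $m$ over $f$ and $f\circ m$ is a pullback precisely because $f$ is monic, in (4) pullback-stability of $\mathcal E$ is exactly what turns $f\circ f^{-1}(n)=n\circ f'$ into an $(\mathcal E,\mathcal M)$-factorization, and your preliminary check that the left adjoint $f(-)$ coincides with the $\mathcal M$-part of the factorization of $f\circ m$ is the right thing to nail down first. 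Your reading of item (2) is also the correct repair: with $o_X$ the trivial (least) subobject, its characteristic property in Diagram (\ref{trivial}) forces $f(o_X)=o_Y$ for \emph{every} morphism $f$ admitting an image, so the equivalence as printed would make every morphism lie in $\mathcal E$; the intended Dikranjan--Tholen statement is $f\in\mathcal E\Leftrightarrow f(1_X)\cong 1_Y$, which you prove correctly. The only step I would make explicit is that the converse direction of (2) uses closure of $\mathcal E$ under composition with isomorphisms, a standing property of $(\mathcal E,\mathcal M)$-factorization systems but not one the paper records among its hypotheses on $\mathcal M$.
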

 Now, 
 \begin{prop}\label{rightadj}
 Let   $\mathfrak C$ be $\mathcal M$-complete and has $(\mathcal E, \mathcal M)$-factorization system for morphisms, and assume the existence of an object $P$ such that
 \[
 e\in \mathcal E\Leftrightarrow P\,\,\  \text{is projective with respect to}\,\,\  e 
 \]
 holds for every morphism $e$ in $\mathfrak C$. Then for a morphism $f:X\rightarrow Y$ and non-empty families $(m_i)_{\text{\tiny{$i\in I$}}}$ in $\widehat{\mathcal M/X}$ and $(n_i)_{\text{\tiny{$i\in I$}}}$ in $\widehat{\mathcal M/Y}$, we have:
 \begin{enumerate}
 \item If $f$ is a monomorphism, then $f\left(\bigwedge_{i\in I}m_i\right)=\bigwedge_{i\in I} f\left(m_i\right)$;
 \item If the sink $\left(j_i:N_i\rightarrow N  \right)_{i\in I}$ belonging to a union  $n= \bigvee_{i\in I}n_i$ as in (\ref{union}) has the property that for every $y:P\rightarrow N$ there is an $i\in I$ and a morphism $x:P\rightarrow N_i$ with $j_i\centerdot x=y$, then 
 $$f^{-1}\left( \bigvee_{i\in I}n_i\right)= \bigvee_{i\in I} f^{-1} \left(n_i\right)$$.
 \end{enumerate}
 \end{prop}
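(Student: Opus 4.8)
The plan is to show, in each part, that the inequality which already holds by monotonicity (of $f(-)$ in part $(1)$, of $f^{-1}(-)$ in part $(2)$) is in fact an equality, and to get the nontrivial inequality by detecting the order relation between $\mathcal M$-subobjects on \emph{$P$-valued points}. The key lemma I would establish first is: \emph{for $\mathcal M$-subobjects $a: A\to X$ and $b: B\to X$ one has $a\leqslant b$ iff for every $p: P\to A$ there is a $q: P\to B$ with $b\circ q=a\circ p$.} To see this, form the meet $a\land b: A\land B\to X$ as the pullback of $a$ and $b$ (Diagram (\ref{inf})); the projection $\pi: A\land B\to A$ lies in $\mathcal M$ by pullback-stability of $\mathcal M$, and $a\leqslant b$ holds exactly when $\pi$ is an isomorphism, equivalently (since $\mathcal E\cap\mathcal M$ consists of isomorphisms) exactly when $\pi\in\mathcal E$, equivalently --- by the hypothesis on $P$ --- exactly when $P$ is projective with respect to $\pi$; and a lift of $p: P\to A$ along $\pi$ is, by the universal property of the pullback $A\land B$, precisely a morphism $q: P\to B$ with $b\circ q=a\circ p$.

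For $(1)$: from $\bigwedge_i m_i\leqslant m_j$ and monotonicity we get $f(\bigwedge_i m_i)\leqslant f(m_j)$ for all $j$, hence $f(\bigwedge_i m_i)\leqslant\bigwedge_i f(m_i)$. For the reverse I apply the lemma to $a=\bigwedge_i f(m_i)$ and $b=f(\bigwedge_i m_i)$ in $\widehat{\mathcal M/Y}$. Given $p: P\to\bigwedge_i f(M_i)$, the multiple pullback projections yield $p_i: P\to f(M_i)$ with $f(m_i)\circ p_i=a\circ p$; writing the image as the $(\mathcal E,\mathcal M)$-factorization $f\circ m_i=f(m_i)\circ e_i$ with $e_i\in\mathcal E$, projectivity of $P$ lifts each $p_i$ to $\widetilde{p}_i: P\to M_i$ with $e_i\circ\widetilde{p}_i=p_i$. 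Then $f\circ m_i\circ\widetilde{p}_i=a\circ p$ is independent of $i$, and since $f$ is a monomorphism $m_i\circ\widetilde{p}_i$ is itself independent of $i$ --- this is the only use of the hypothesis on $f$ --- so $(P,(\widetilde{p}_i)_i)$ is a cone over the multiple pullback diagram defining $\bigwedge_i m_i$. Composing the induced $r: P\to\bigwedge_i M_i$ with the $\mathcal E$-part of the factorization of $f\circ(\bigwedge_i m_i)$ gives $q: P\to f(\bigwedge_i M_i)$ with $b\circ q=a\circ p$, as the lemma requires.

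For $(2)$: monotonicity of $f^{-1}(-)$ gives $\bigvee_i f^{-1}(n_i)\leqslant f^{-1}(\bigvee_i n_i)$. For the reverse I apply the lemma to $a=f^{-1}(\bigvee_i n_i)$ and $b=\bigvee_i f^{-1}(n_i)$ in $\widehat{\mathcal M/X}$. Let $n=\bigvee_i n_i: N\to Y$ carry the sink $(j_i: N_i\to N)_i$, and let $p: P\to f^{-1}(N)$ be given. Post-composing $p$ with the pullback projection $f^{-1}(N)\to N$ yields $y: P\to N$; the hypothesis on the sink produces $i\in I$ and $x: P\to N_i$ with $j_i\circ x=y$, whence $f\circ a\circ p=n\circ y=n_i\circ x$. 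Thus $a\circ p$ and $x$ form a compatible pair for the pullback square defining $f^{-1}(N_i)$, so they factor uniquely through $f^{-1}(n_i): f^{-1}(N_i)\to X$ via some $z: P\to f^{-1}(N_i)$ with $f^{-1}(n_i)\circ z=a\circ p$; composing $z$ with the union map $f^{-1}(N_i)\to\bigvee_i f^{-1}(N_i)$ delivers the $q$ demanded by the lemma.

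I expect the main obstacle to be making the point-detection lemma legitimate at this level of generality: one needs $\mathcal M$ to be stable under pullback and $\mathcal E\cap\mathcal M$ to consist of isomorphisms, both of which I would read off from the assumed $(\mathcal E,\mathcal M)$-factorization system, and one needs the multiple pullback used in $(1)$ to genuinely compute the meet in $\widehat{\mathcal M/X}$ as constructed earlier. Granting that, the two parts are short diagram chases on $P$-valued points; the argument for $(2)$ uses no hypothesis on $f$, while the one for $(1)$ uses only that $f$ is monic, and exactly once.
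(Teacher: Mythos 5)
Your proof is correct. The paper itself gives no argument for this proposition---its ``proof'' is just the citation ``See \cite{DT}, p.~23''---and your point-detection lemma (that $a\leqslant b$ in $\widehat{\mathcal M/X}$ if and only if every $P$-point of $A$ lifts to $B$ over $X$, established by testing whether the pullback projection $A\land B\to A$, which lies in $\mathcal M$, belongs to $\mathcal E$ via projectivity of $P$) is exactly the mechanism used in that reference, so you have reconstructed the intended proof rather than found a new route. The only ingredients you use beyond the stated hypotheses are standard consequences of the assumed $(\mathcal E,\mathcal M)$-factorization system, namely that $\mathcal E\cap\mathcal M$ consists of isomorphisms and that $f(m)$ is the $\mathcal M$-part of the factorization of $f\circ m$; do note that the nonemptiness of $I$ is genuinely needed in part (1), since your induced cone into the multiple pullback requires at least one index to pin down the common composite $m_i\circ\widetilde p_i$, and the statement fails for $I=\emptyset$.
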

 \begin{proof}
 See \cite{DT}, p. 23 
 \end{proof}
 Observe that condition ($2$) of Proposition \ref{rightadj} and condition ($4$) of  Proposition \ref{r-adj} imply that $f^{-1} (-):\widehat{\mathcal M/Y}\rightarrow \widehat{\mathcal M/X}$ have a right adjoint 
\[
f_{*}(-):\widehat{\mathcal M/X}\rightarrow\widehat{\mathcal M/Y}.
\]
For $m: M\rightarrow X$ in $\widehat{\mathcal M/X}$; it is uniquely determined  by the property
 \begin{equation}
 m\leqslant f^{-1} (n)\Leftrightarrow f_{*}(m)\leqslant n
 \end{equation}
 for all $n\in \widehat{\mathcal M/Y}$. Furthermore, (\ref{adj}) implies that $f_{*}$ is an order-preserving map.

 \section{Interior Operators}
 Throughout this section, we consider a category  $\mathfrak C$  satisfying the conditions of Proposition \ref{rightadj}.
 \begin{defi}
 An interior operator $I$ of the category $\mathfrak C$ with respect to the class $\mathcal M$ of subobjects is given by a family $I =(i_{\text{\tiny{$X$}}})_{\text{$X\in \mathfrak C$}}$ of maps\linebreak $i_{\text{\tiny{$X$}}}:\widehat{\mathcal M/X}\rightarrow\widehat{\mathcal M/X}$ such that
 \begin{itemize}
 \item[($I_1)$] $\left(\text{Contraction}\right)$\,\  $i_{\text{\tiny{$X$}}}(m)\leqslant m$ for all $m\in \widehat{\mathcal M/X}$;
 \item[($I_2)$] $\left(\text{Monotonicity}\right)$\,\  If $m\leqslant k$ in $\widehat{\mathcal M/X}$, then $i_{\text{\tiny{$X$}}}(m)\leqslant i_{\text{\tiny{$X$}}}(k)$
 \item[($I_3)$] $\left(\text{Upper bound}\right)$\,\  $i_{\text{\tiny{$X$}}}(1_X)=1_X$.
 \end{itemize}
 \end{defi}
 \begin{defi}
 An $I$-space is a pair $(X,i_{\text{\tiny{$X$}}})$ where $X$ is an object of $\mathfrak C$ and $i_{\text{\tiny{$X$}}}$ is an interior operator on $X$.
 \end{defi}
 \begin{defi}
 A morphism $f:X\rightarrow Y$ of  $\mathfrak C$ is said to be $I$-continuous if 
 \begin{equation}\label{conti}
 f^{-1}\left( i_{\text{\tiny{$Y$}}}(m)\right)\leqslant i_{\text{\tiny{$X$}}}\left( f^{-1}(m)\right)
 \end{equation}
 for all $m\in \widehat{\mathcal M/Y}$.
 \end{defi}
 \begin{prop}
 Let $f:X\rightarrow Y$ and $g:Y\rightarrow Z$ be two morphisms  of  $\mathfrak C$ $I$-continuous then $g\centerdot f$ is a  morphism of  $\mathfrak C$  which is $I$-continuous.
 \end{prop}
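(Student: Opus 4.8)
The plan is to reduce the claim to three ingredients already available: that inverse images compose, i.e. $(g\centerdot f)^{-1}(-)\cong f^{-1}\bigl(g^{-1}(-)\bigr)$ on $\widehat{\mathcal M/Z}$; that each $f^{-1}(-)$ is order-preserving; and the two $I$-continuity inequalities for $f$ and $g$. Fix $m\in\widehat{\mathcal M/Z}$. First I would record the pullback-pasting step: glueing the pullback square defining $g^{-1}(m)$ over $Y$ on top of the pullback square defining $f^{-1}\bigl(g^{-1}(m)\bigr)$ over $X$ yields a pullback rectangle over $X\xrightarrow{g\centerdot f}Z$, whose $\mathcal M$-part is therefore (up to $\cong$) the inverse image $(g\centerdot f)^{-1}(m)$. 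Since $\mathcal M$ is closed under composition this rectangle lives in $\widehat{\mathcal M/X}$, so $(g\centerdot f)^{-1}(m)\cong f^{-1}\bigl(g^{-1}(m)\bigr)$ in $\widehat{\mathcal M/X}$, and the same identity holds with $m$ replaced by $i_{\text{\tiny{$Z$}}}(m)$.

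Next I would chain the inequalities. Applying $I$-continuity of $g$ to the subobject $m\in\widehat{\mathcal M/Z}$ gives
\[
g^{-1}\bigl(i_{\text{\tiny{$Z$}}}(m)\bigr)\leqslant i_{\text{\tiny{$Y$}}}\bigl(g^{-1}(m)\bigr).
\]
Applying the order-preserving map $f^{-1}(-):\widehat{\mathcal M/Y}\rightarrow\widehat{\mathcal M/X}$ to both sides yields
\[
f^{-1}\Bigl(g^{-1}\bigl(i_{\text{\tiny{$Z$}}}(m)\bigr)\Bigr)\leqslant f^{-1}\Bigl(i_{\text{\tiny{$Y$}}}\bigl(g^{-1}(m)\bigr)\Bigr).
\]
Now applying $I$-continuity of $f$ to the subobject $g^{-1}(m)\in\widehat{\mathcal M/Y}$ gives
\[
f^{-1}\Bigl(i_{\text{\tiny{$Y$}}}\bigl(g^{-1}(m)\bigr)\Bigr)\leqslant i_{\text{\tiny{$X$}}}\Bigl(f^{-1}\bigl(g^{-1}(m)\bigr)\Bigr).
\]
Combining the last two inequalities and then substituting the pasting identities $f^{-1}(g^{-1}(i_{\text{\tiny{$Z$}}}(m)))\cong(g\centerdot f)^{-1}(i_{\text{\tiny{$Z$}}}(m))$ and $f^{-1}(g^{-1}(m))\cong(g\centerdot f)^{-1}(m)$ yields
\[
(g\centerdot f)^{-1}\bigl(i_{\text{\tiny{$Z$}}}(m)\bigr)\leqslant i_{\text{\tiny{$X$}}}\bigl((g\centerdot f)^{-1}(m)\bigr),
\]
which is exactly (\ref{conti}) for $g\centerdot f$; since $m$ was arbitrary, $g\centerdot f$ is $I$-continuous.

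I expect the only genuinely non-formal step to be the pullback-pasting claim $(g\centerdot f)^{-1}(-)\cong f^{-1}\circ g^{-1}(-)$, which rests on the standard two-pullback lemma together with the fact that $\mathcal M$ is closed under composition so that the composite subobject is again an $\mathcal M$-subobject (and uniqueness of inverse images up to $\cong$ then forces the identification). Everything after that is a three-line monotonicity chain using the hypotheses and the order-preservation of $f^{-1}(-)$ noted in Section~1.2. No appeal to the adjoints $f(-)$ or $f_{*}(-)$ is needed for this statement.
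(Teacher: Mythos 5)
Your proposal is correct and follows essentially the same route as the paper: apply $I$-continuity of $g$, push the inequality through the order-preserving map $f^{-1}(-)$, apply $I$-continuity of $f$, and identify $f^{-1}\circ g^{-1}$ with $(g\centerdot f)^{-1}$. The only difference is that you explicitly justify the pullback-pasting identity, which the paper uses tacitly.
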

 \begin{proof}
 Since $g:Y\rightarrow Z$ is $I$-continuous, we have  $g^{-1}\big( i_{\text{\tiny{$Z$}}}(m)\big)\leqslant i_{\text{\tiny{$Y$}}}\big( g^{-1}(m)\big)$
  for all $m\in \widehat{\mathcal M/Z}$, it fallows that $$f^{-1}\Big(g^{-1}\big( i_{\text{\tiny{$Z$}}}(m)\big)\Big)\leqslant f^{-1}\Big( i_{\text{\tiny{$Y$}}}\big( g^{-1}(m)\big)\Big);$$
  now,  by the $I$-continuity of $f$, $$f^{-1}\Big( i_{\text{\tiny{$Y$}}}\big( g^{-1}(m)\big)\Big)\leqslant i_{\text{\tiny{$X$}}}\Big( f^{-1}\big(g^{-1}(m)\big)\Big),$$ 
  threfore $$f^{-1}\Big(g^{-1}\big( i_{\text{\tiny{$Z$}}}(m)\big)\Big)\leqslant i_{\text{\tiny{$X$}}}\Big( f^{-1}\big(g^{-1}(m)\big)\Big),$$
   that is to say 
   $$(g\centerdot f)^{-1}\big( i_{\text{\tiny{$Z$}}}(m)\big)\Big)\leqslant i_{\text{\tiny{$X$}}}\Big( (g\centerdot f)^{-1}(m)\Big)$$
 for all $m\in \widehat{\mathcal M/Z}$. This complete the proof.   
 \end{proof}
  As a consequence we obtain
 \begin{defi}
 The category $\mathfrak C_{\text{\tiny{$I$}}}$ of $I$-spaces comprises the following data:
 \begin{enumerate}
 \item {\bf Objects}: Pairs $(X,i_{\text{\tiny{$X$}}})$ where $X$ is an object of $\mathfrak C$ and $i_{\text{\tiny{$X$}}}$ is an interior operator on $X$.
\item {\bf Morphisms}: Morphisms of $\mathfrak C$ which are $I$-continuous.
 \end{enumerate}
 \end{defi}
 \subsection{The lattice structure of all interior operators}
 For a category $\mathfrak C$  satisfying the conditions of Proposition \ref{rightadj} we consider the conglomerate
 \[
 Int(\mathfrak C,\mathcal M)
 \]
 of all  interior operators on $\mathfrak C$ with respect to $\mathcal M$. It is ordered by
 \[
 I\leqslant J \Leftrightarrow i_{\text{\tiny{$X$}}}(n)\leqslant j_{\text{\tiny{$X$}}}(n), \,\,\ \text{for all $n\in \widehat{\mathcal M/X}$ and all $X$  object of $\mathfrak C$. }
 \]
 This way $Int(\mathfrak C, \mathcal M)$ inherits a lattice structure from $\mathcal M$:
 \begin{prop}
 For $\mathfrak C$\,\ $\mathcal M$-complete, every family $(I_{\text{\tiny{$\lambda$}}})_{\text{\tiny{$\lambda\in \Lambda$}}}$ in $Int(\mathfrak C, \mathcal M)$ has a join $\bigvee\limits_{\text{\tiny{$\lambda\in \Lambda $}}}I_{\text{\tiny{$\lambda $}}}$ and a meet $\bigwedge\limits_{\text{\tiny{$\lambda\in \Lambda $}}}I_{\text{\tiny{$\lambda $}}}$ in $Int(\mathfrak C, \mathcal M)$. The discrete interior operator
 \[ I_{\text{\tiny{$D$}}}=({i_{\text{\tiny{$D$}}}}_{\text{\tiny{$X$}}})_{\text{$X\in \mathfrak C$}}\,\,\ \text{with}\,\,\ {i_{\text{\tiny{$D$}}}}_{\text{\tiny{$X$}}}(m)=m\,\,\ \text{for all}\,\ m\in \widehat{\mathcal M/X}
 \]
  is the largest element in $Int(\mathfrak C, \mathcal M)$, and the trivial interior operator
  \[ 
I_{\text{\tiny{$T$}}}=({i_{\text{\tiny{$T$}}}}_\text{\tiny{$X$}})_{\text{$X\in \mathfrak C$}}\,\,\ \text{with}\,\,\ {i_{\text{\tiny{$T$}}}}_{\text{\tiny{$X$}}}(m)=
  \begin{cases}
o_ {\text{\tiny{$X$}}}& \text{for all}\,\ m\in \widehat{\mathcal M/X},\,\ m\ne 1_{\text{\tiny{$X$}}}\\
1_{\text{\tiny{$X$}}} &\text {if}\,\ m=1_{\text{\tiny{$X$}}}
\end{cases}
 \]
  
  is the least one.
 \end{prop}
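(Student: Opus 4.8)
The plan is to show that every lattice operation on the conglomerate $Int(\mathfrak C,\mathcal M)$ can be computed \emph{pointwise}, exploiting that $\mathcal M$-completeness makes each $\widehat{\mathcal M/X}$ a large-complete lattice (by the Proposition on $\mathcal M$-intersections), with top element $1_X$ and bottom element $o_X$, these being respectively the meet and the join of the empty family.

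First I would treat the meet. Given a family $(I_\lambda)_{\lambda\in\Lambda}$, define $i_X(m):=\bigwedge_{\lambda\in\Lambda}(i_\lambda)_X(m)$ for each object $X$ and each $m\in\widehat{\mathcal M/X}$; this exists by large-completeness. Axiom $(I_1)$ is immediate, since each $(i_\lambda)_X(m)\leqslant m$ forces the meet below $m$. For $(I_2)$, if $m\leqslant k$ then for every $\lambda$ one has $\bigwedge_{\mu}(i_\mu)_X(m)\leqslant (i_\lambda)_X(m)\leqslant (i_\lambda)_X(k)$, whence $i_X(m)\leqslant\bigwedge_{\lambda}(i_\lambda)_X(k)=i_X(k)$. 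Axiom $(I_3)$ holds because $(i_\lambda)_X(1_X)=1_X$ for all $\lambda$. That $I=(i_X)_X$ is the greatest lower bound is then formal: any $J$ with $J\leqslant I_\lambda$ for all $\lambda$ satisfies $j_X(m)\leqslant (i_\lambda)_X(m)$ for all $\lambda$, hence $j_X(m)\leqslant i_X(m)$, i.e. $J\leqslant I$. Dually, the join is obtained by setting $i'_X(m):=\bigvee_{\lambda\in\Lambda}(i_\lambda)_X(m)$: here $(I_1)$ uses that a join of subobjects all lying below $m$ again lies below $m$, $(I_2)$ transfers monotonicity exactly as above, $(I_3)$ is clear, and minimality among upper bounds is again formal.

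I would point out explicitly that, unlike the case of closure operators treated in \cite{DT}, no correction to the pointwise formula is needed here: axioms $(I_1)$--$(I_3)$ contain no idempotency requirement, so the pointwise join is already an interior operator. This is precisely the step at which one should pause to verify that the pointwise construction does not leave the conglomerate $Int(\mathfrak C,\mathcal M)$, and it is the only place where a genuine subtlety could arise; once this is checked, everything else is bookkeeping.

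It remains to identify the two extremal operators. For $I_D$ the axioms are trivial ($m\leqslant m$, monotonicity of the identity map, $1_X=1_X$), and for any $I\in Int(\mathfrak C,\mathcal M)$ axiom $(I_1)$ gives $i_X(m)\leqslant m=(i_D)_X(m)$, so $I\leqslant I_D$; hence $I_D$ is the largest element. For $I_T$, contraction holds because $o_X$ is the bottom and $1_X$ the top of $\widehat{\mathcal M/X}$; monotonicity is checked by the case split according to whether $m$ and $k$ equal $1_X$, using that $1_X$ being the top element forces $k=1_X$ whenever $m=1_X$; and $(I_3)$ holds by definition. Finally, for any $I$ one has $(i_T)_X(m)=o_X\leqslant i_X(m)$ when $m\neq 1_X$, and $(i_T)_X(1_X)=1_X=i_X(1_X)$ by $(I_3)$, so $I_T\leqslant I$, proving that $I_T$ is the least element.
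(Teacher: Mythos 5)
Your proof is correct and follows essentially the same route as the paper: both compute the join and meet pointwise, $\widetilde{i}_X(m)=\bigvee_\lambda (i_\lambda)_X(m)$ respectively $\bigwedge_\lambda (i_\lambda)_X(m)$, using that $\mathcal M$-completeness makes each $\widehat{\mathcal M/X}$ a large-complete lattice, and then verify $(I_1)$--$(I_3)$ termwise. You are in fact somewhat more thorough than the paper, which only writes out the join and dismisses the meet, $I_D$ and $I_T$ with ``similarly,'' and which does not explicitly check the universal (glb/lub) property of the pointwise formulas as you do.
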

 \begin{proof}
 For $\Lambda\ne\emptyset$, let $\widetilde{I}=\bigvee\limits_{\text{\tiny{$\lambda\in \Lambda $}}}I_{\text{\tiny{$\lambda $}}}$, then 
 \[
 \widetilde{i_{\text{\tiny{$X$}}}}=\bigvee\limits_{\text{\tiny{$\lambda\in \Lambda $}}} {i_{\text{\tiny{$\lambda $}}}}_{\text{\tiny{$X$}}},
 \]
 for all  $X$ object of $\mathfrak C$, satisfies
 \begin{itemize}
 \item $ \widetilde{i_{\text{\tiny{$X$}}}}(m)\leqslant m$,  because ${i_{\text{\tiny{$\lambda $}}}}_{\text{\tiny{$X$}}}(m)\leqslant m$ for all $m\in \widehat{\mathcal M/X}$ and for all $\lambda \in \Lambda$.
 \item If $m\leqslant k$ in $\widehat{\mathcal M/X}$ then ${i_{\text{\tiny{$\lambda $}}}}_{\text{\tiny{$X$}}}(m)\leqslant {i_{\text{\tiny{$\lambda $}}}}_{\text{\tiny{$X$}}}(k)$  for all $m\in \widehat{\mathcal M/X}$ and for all $\lambda \in \Lambda$, threfore $ \widetilde{i_{\text{\tiny{$X$}}}}(m)\leqslant \widetilde{i_{\text{\tiny{$X$}}}}(k)$.
 \item Since ${i_{\text{\tiny{$\lambda $}}}}_{\text{\tiny{$X$}}}(1_{{\text{\tiny{$X$}}}})=1_{{\text{\tiny{$X$}}}} $ for all $m\in \widehat{\mathcal M/X}$ and for all $\lambda \in \Lambda$, we have that  $ \widetilde{i_{\text{\tiny{$X$}}}}(1_{{\text{\tiny{$X$}}}})=1_{{\text{\tiny{$X$}}}}$.
 \end{itemize}
 Similarly  $\bigwedge\limits_{\text{\tiny{$\lambda\in \Lambda $}}}I_{\text{\tiny{$\lambda $}}}$,\,\  $ I_{\text{\tiny{$D$}}}$ and $I_{\text{\tiny{$T$}}}$ are interior operators.
 \end{proof}
 \begin{coro}\label{complete}
  For $\mathfrak C$\,\ $\mathcal M$-complete and for every  object $X$ of $\mathfrak C$
  \[
  Int(X) = \{i_{\text{\tiny{$X$}}}\mid i_{\text{\tiny{$X$}}}\,\ \text{ is an interior operator on}\,\ X\}
  \]
  is a complete lattice.
 \end{coro}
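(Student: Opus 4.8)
The plan is to deduce Corollary \ref{complete} directly from the preceding Proposition, observing that the only new content is the restriction of the ordered conglomerate $Int(\mathfrak C,\mathcal M)$ to a fixed object $X$. First I would recall that, by definition of the order on $Int(\mathfrak C,\mathcal M)$, two interior operators $I,J$ satisfy $I\leqslant J$ precisely when $i_{\text{\tiny{$X$}}}(n)\leqslant j_{\text{\tiny{$X$}}}(n)$ for all $X$ and all $n\in\widehat{\mathcal M/X}$; fixing $X$ and looking only at the $X$-components therefore equips $Int(X)$ with the pointwise partial order induced from the complete lattice $\widehat{\mathcal M/X}$ (which is large-complete, hence in particular a complete lattice, by Proposition 1.3 applied to $\mathfrak C$ being $\mathcal M$-complete).

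Next I would verify that an arbitrary family $(i_{\text{\tiny{$X$}},\lambda})_{\lambda\in\Lambda}$ in $Int(X)$ has a join and a meet in $Int(X)$. The construction is the pointwise one: set $\widetilde{i_{\text{\tiny{$X$}}}}(m)=\bigvee_{\lambda\in\Lambda} i_{\text{\tiny{$X$}},\lambda}(m)$ and $\underline{i_{\text{\tiny{$X$}}}}(m)=\bigwedge_{\lambda\in\Lambda} i_{\text{\tiny{$X$}},\lambda}(m)$, the joins and meets being taken in $\widehat{\mathcal M/X}$, which exist because that lattice is large-complete. One then checks that $\widetilde{i_{\text{\tiny{$X$}}}}$ and $\underline{i_{\text{\tiny{$X$}}}}$ again satisfy $(I_1)$, $(I_2)$, $(I_3)$ — this is exactly the three-bullet computation carried out in the proof of the Proposition, applied here to the single object $X$ rather than to all objects simultaneously — so both lie in $Int(X)$, and by construction they are the least upper bound and greatest lower bound of the family. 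For the empty family, the supremum is the trivial interior operator's $X$-component ${i_{\text{\tiny{$T$}}}}_{\text{\tiny{$X$}}}$ and the infimum is the discrete one ${i_{\text{\tiny{$D$}}}}_{\text{\tiny{$X$}}}$, both of which the Proposition already identifies as legitimate interior operators; restricting to $X$, they are the bottom and top of $Int(X)$.

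Since $Int(X)$ is a partially ordered set in which every family (including the empty one and proper-class-indexed ones, though here the relevant families are small once $X$ is fixed and $\widehat{\mathcal M/X}$ is a set in the set-based examples) admits both a join and a meet, $Int(X)$ is a complete lattice, which is the assertion. I do not expect any genuine obstacle: the corollary is a routine specialization of the Proposition, the only point requiring a word of care being the observation that the pointwise-in-$m$ operations on a fixed $X$ stay inside the class of interior operators, and that is immediate from the monotonicity and normalization of joins and meets in $\widehat{\mathcal M/X}$ together with $(I_1)$–$(I_3)$ for each member of the family.
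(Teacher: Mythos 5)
Your proposal is correct and follows essentially the same route as the paper, which offers no separate proof of the corollary and treats it as an immediate specialization of the preceding proposition: the pointwise join/meet verification of $(I_1)$--$(I_3)$ you describe is exactly the computation in that proposition's proof, restricted to the single object $X$. Your identification of the top and bottom of $Int(X)$ as the $X$-components of the discrete and trivial operators likewise matches the proposition's statement, so no further comment is needed.
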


 \subsection{Initial interior operators}
Let  $\mathfrak C$  be a category satisfying the conditions of Proposition \ref{rightadj}, let  $(Y,i_{\text{\tiny{$Y$}}})$ be an object of  $\mathfrak C_{\text{\tiny{$I$}}}$ and let $X$ be an object of $\mathfrak C$. For each morphism  $f:X\rightarrow Y$ in $\mathfrak C$ we define on $X$ the operotor 
\begin{equation} \label{initial}
i_{\text{\tiny{$X_{f}$}}}:=f^{-1}\centerdot i_{\text{\tiny{$Y$}}}\centerdot f_{*}.
\end{equation}
\begin{prop}\label{ini-cont}
The operator (\ref{initial}) ia an interior operator on $X$ for which the morphism $f$ is $I$-continuous.
\end{prop}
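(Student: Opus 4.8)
The plan is to check the axioms $(I_1)$, $(I_2)$, $(I_3)$ for $i_{X_f}=f^{-1}\centerdot i_{Y}\centerdot f_{*}$ one at a time, and then to unwind the definition of $I$-continuity. Throughout, the only ingredients are that $f_{*}$ and $f^{-1}$ are order-preserving, the axioms satisfied by $i_{Y}$, and the triple adjunction $f(-)\vdash f^{-1}(-)\vdash f_{*}(-)$, in particular the unit and counit inequalities of $f^{-1}\vdash f_{*}$ supplied by clause $(iv)$ of Lemma \ref{adj}, namely $m\leqslant f_{*}(f^{-1}(m))$ for $m\in\widehat{\mathcal M/Y}$ and $f^{-1}(f_{*}(n))\leqslant n$ for $n\in\widehat{\mathcal M/X}$.

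Monotonicity $(I_2)$ is immediate: if $m\leqslant k$ in $\widehat{\mathcal M/X}$, then $f_{*}(m)\leqslant f_{*}(k)$, whence $i_{Y}(f_{*}(m))\leqslant i_{Y}(f_{*}(k))$ by $(I_2)$ for $i_{Y}$, whence $i_{X_f}(m)=f^{-1}(i_{Y}(f_{*}(m)))\leqslant f^{-1}(i_{Y}(f_{*}(k)))=i_{X_f}(k)$, each step using monotonicity of the relevant map. For contraction $(I_1)$ I would start from $(I_1)$ for $i_{Y}$, that is $i_{Y}(f_{*}(m))\leqslant f_{*}(m)$, apply the order-preserving map $f^{-1}$ to get $i_{X_f}(m)\leqslant f^{-1}(f_{*}(m))$, and then invoke the counit inequality $f^{-1}(f_{*}(m))\leqslant m$ to conclude $i_{X_f}(m)\leqslant m$. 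For the normalization $(I_3)$ the key subsidiary fact is $f_{*}(1_X)=1_Y$: this holds because $f_{*}$, being a right adjoint, preserves all meets (Proposition \ref{preserve}) and $1_X$, $1_Y$ are the empty meets, i.e. the top elements, of $\widehat{\mathcal M/X}$ and $\widehat{\mathcal M/Y}$; equivalently it follows from $f_{*}(1_X)=\bigvee\{n\in\widehat{\mathcal M/Y}\mid f^{-1}(n)\leqslant 1_X\}=\bigvee\widehat{\mathcal M/Y}=1_Y$, the formula of Lemma \ref{adj}(iii). Combined with $f^{-1}(1_Y)=1_X$ (the inverse image of an identity along $f$) and $(I_3)$ for $i_{Y}$, this gives $i_{X_f}(1_X)=f^{-1}(i_{Y}(f_{*}(1_X)))=f^{-1}(i_{Y}(1_Y))=f^{-1}(1_Y)=1_X$. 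Hence $i_{X_f}$ is an interior operator on $X$.

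It remains to see that $f:(X,i_{X_f})\to(Y,i_{Y})$ is $I$-continuous, i.e. that $f^{-1}(i_{Y}(m))\leqslant i_{X_f}(f^{-1}(m))$ for all $m\in\widehat{\mathcal M/Y}$, where by definition $i_{X_f}(f^{-1}(m))=f^{-1}(i_{Y}(f_{*}(f^{-1}(m))))$. I would obtain this straight from the unit inequality $m\leqslant f_{*}(f^{-1}(m))$: applying the monotone map $i_{Y}$ gives $i_{Y}(m)\leqslant i_{Y}(f_{*}(f^{-1}(m)))$, and then applying the order-preserving map $f^{-1}$ to both sides yields precisely the desired inequality. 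I do not expect a genuine obstacle here; the whole argument is formal bookkeeping inside the triple adjunction, and the one point that needs care is to keep the unit $m\leqslant f_{*}(f^{-1}(m))$ distinct from the counit $f^{-1}(f_{*}(m))\leqslant m$ --- contraction uses the counit, whereas continuity uses the unit.
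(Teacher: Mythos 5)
Your proof is correct and follows essentially the same route as the paper's: contraction via $i_Y$-contraction plus the counit $f^{-1}(f_*(m))\leqslant m$, monotonicity by composing order-preserving maps, and continuity from the unit $n\leqslant f_*(f^{-1}(n))$ followed by $i_Y$ and $f^{-1}$. The only difference is that you spell out why $f_*(1_X)=1_Y$ (right adjoints preserve the empty meet), a step the paper leaves implicit in its one-line verification of $(I_3)$.
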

\begin{proof}\
\begin{enumerate}
\item[($I_1)$] $\left(\text{Contraction}\right)$\,\  $i_{\text{\tiny{$X_{f}$}}}(m)= f^{-1}\centerdot i_{\text{\tiny{$Y$}}}\centerdot f_{*}(m)\leqslant f^{-1}\centerdot  f_{*}(m)\leqslant m$ for all $m\in \widehat{\mathcal M/X}$;
 \item[($I_2)$] $\left(\text{Monotonicity}\right)$\,\   $m\leqslant k$ in $\widehat{\mathcal M/X}$, implies $f_{*}(m)\leqslant f_{*}(k)$, then\linebreak $i_{\text{\tiny{$Y$}}}\centerdot f_{*}(m)\leqslant i_{\text{\tiny{$Y$}}}\centerdot f_{*}(k)$, consequently  $ f^{-1}\centerdot i_{\text{\tiny{$Y$}}}\centerdot f_{*}(m)\leqslant f^{-1}\centerdot i_{\text{\tiny{$Y$}}}\centerdot f_{*}(k)$;
 \item[($I_3)$] $\left(\text{Upper bound}\right)$\,\  $i_{\text{\tiny{$X_{f}$}}}(1_X)=f^{-1}\centerdot i_{\text{\tiny{$Y$}}}\centerdot f_{*}(1_X)=1_{X}$.
\end{enumerate}
Finally, 
\begin{align*}
f^{-1}\big(i_{\text{\tiny{$Y$}}}(n)\big)&\leqslant f^{-1}\big(i_{\text{\tiny{$Y$}}}\centerdot f_{*}\centerdot f^{-1}(n)\big)=(f^{-1}\centerdot i_{\text{\tiny{$Y$}}}\centerdot f_{*})\big(f^{-1}(n)\big)\\
&= i_{\text{\tiny{$X_{f}$}}}\big(f^{-1}(n)\big),
\end{align*}
 for all $n\in \widehat{\mathcal M/Y}$.
\end{proof}
It is clear that $ i_{\text{\tiny{$X_{f}$}}}$ is the coarsest interior operator on $X$ for which the morphism $f$ is $I$-continuous; more precisaly
\begin{prop}\label{unique}
Let $(Z,i_{\text{\tiny{$Z$}}})$ and $(Y,i_{\text{\tiny{$Y$}}})$ be objects of $\mathfrak C_{\text{\tiny{$I$}}}$, and let $X$ be an object of  $\mathfrak C$. For each morphism  $g:Z\rightarrow X$ in  $\mathfrak C$ and for \linebreak $f:(X,i_{\text{\tiny{$X_{f}$}}})\rightarrow (Y,i_{\text{\tiny{$Y$}}})$ an $I$-continuous morphism, $g$  is $I$-continuous if and only if $g\centerdot f$ is $I$-continuous.
\end{prop}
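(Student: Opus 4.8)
The plan is to establish the two implications separately: the forward one is immediate from results already at hand, and the backward one is a short order-theoretic chase using the defining formula $i_{\text{\tiny{$X_{f}$}}}=f^{-1}\centerdot i_{\text{\tiny{$Y$}}}\centerdot f_{*}$ of (\ref{initial}). Write $h:=f\centerdot g\colon Z\rightarrow Y$ for the composite. For the forward direction I would first observe that, by Proposition \ref{ini-cont}, the morphism $f\colon (X,i_{\text{\tiny{$X_{f}$}}})\rightarrow (Y,i_{\text{\tiny{$Y$}}})$ is $I$-continuous; hence, if $g\colon (Z,i_{\text{\tiny{$Z$}}})\rightarrow (X,i_{\text{\tiny{$X_{f}$}}})$ is $I$-continuous, then $h=f\centerdot g$ is $I$-continuous by the Proposition above on composition of $I$-continuous morphisms. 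This costs no computation.

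For the converse, assume $h$ is $I$-continuous; I must verify $g^{-1}\big(i_{\text{\tiny{$X_{f}$}}}(m)\big)\leqslant i_{\text{\tiny{$Z$}}}\big(g^{-1}(m)\big)$ for every $m\in\widehat{\mathcal M/X}$. Unfolding (\ref{initial}) gives $i_{\text{\tiny{$X_{f}$}}}(m)=f^{-1}\big(i_{\text{\tiny{$Y$}}}(f_{*}(m))\big)$, and since inverse images compose contravariantly, $g^{-1}\centerdot f^{-1}=(f\centerdot g)^{-1}=h^{-1}$ (obtained by pasting the two defining pullback squares), one gets $g^{-1}\big(i_{\text{\tiny{$X_{f}$}}}(m)\big)=h^{-1}\big(i_{\text{\tiny{$Y$}}}(f_{*}(m))\big)$. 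Applying the $I$-continuity of $h$ at the subobject $n=f_{*}(m)\in\widehat{\mathcal M/Y}$ gives $h^{-1}\big(i_{\text{\tiny{$Y$}}}(f_{*}(m))\big)\leqslant i_{\text{\tiny{$Z$}}}\big(h^{-1}(f_{*}(m))\big)=i_{\text{\tiny{$Z$}}}\big(g^{-1}(f^{-1}(f_{*}(m)))\big)$. Finally $f^{-1}\centerdot f_{*}(m)\leqslant m$ (the counit of $f^{-1}\vdash f_{*}$, already invoked in the proof of Proposition \ref{ini-cont}), so monotonicity of $g^{-1}$ and then of $i_{\text{\tiny{$Z$}}}$ yields $i_{\text{\tiny{$Z$}}}\big(g^{-1}(f^{-1}(f_{*}(m)))\big)\leqslant i_{\text{\tiny{$Z$}}}\big(g^{-1}(m)\big)$; chaining the three inequalities gives the claim.

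The argument is essentially formal, so there is no genuine obstacle; the points that need a little care are the contravariant functoriality of $(-)^{-1}$, i.e.\ $g^{-1}\centerdot f^{-1}=(f\centerdot g)^{-1}$ — which follows from the standard pasting lemma for pullbacks and is already used implicitly in the composition proposition — and keeping track of the chain of adjunctions $f\vdash f^{-1}\vdash f_{*}$, in particular applying the $I$-continuity hypothesis for $h$ precisely at the subobject $f_{*}(m)$ rather than at $m$. Note that for the converse no property of $f$ is used beyond its being an arrow of $\mathfrak C$ together with the explicit shape of $i_{\text{\tiny{$X_{f}$}}}$, while the $I$-continuity of $f$ (automatic for the initial structure, by Proposition \ref{ini-cont}) enters only in the forward implication.
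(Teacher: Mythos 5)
Your proof is correct and the substantive direction (assuming $f\centerdot g$ is $I$-continuous, deducing $g$ is) is exactly the paper's computation: unfold $i_{\text{\tiny{$X_{f}$}}}=f^{-1}\centerdot i_{\text{\tiny{$Y$}}}\centerdot f_{*}$, apply $I$-continuity of the composite at $f_{*}(m)$, and finish with the counit $f^{-1}\centerdot f_{*}(m)\leqslant m$. The only difference is that you also spell out the easy converse via Proposition \ref{ini-cont} and the composition proposition, which the paper leaves implicit.
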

\begin{proof}
Suppose that $g\centerdot f$ is $I$-continuous, i. e.
$$(f\centerdot g)^{-1}\big(i_{\text{\tiny{$Y$}}}(n)\big)\leqslant i_{\text{\tiny{$Z$}}}\big( (f\centerdot g)^{-1}(n) \big)$$
 for all $n\in \widehat{\mathcal M/Y}$. Then, for all $m\in \widehat{\mathcal M/X}$, we have
\begin{align*}
 g^{-1}\big(i_{\text{\tiny{$X_{f}$}}}(m)\big)&=g^{-1}\big(f^{-1}\centerdot i_{\text{\tiny{$Y$}}}\centerdot f_{*}(m)\big)=(f\centerdot g)^{-1}\big( i_{\text{\tiny{$Y$}}}( f_{*}(m)) \big)\\
 &\leqslant i_{\text{\tiny{$Z$}}}\big( (f\centerdot g)^{-1}(f_{*}(m) ) \big)=i_{\text{\tiny{$Z$}}}\big( g^{-1}\centerdot f^{-1}\centerdot  f_{*} (m)\big)\\
 &\leqslant i_{\text{\tiny{$Z$}}}\big( g^{-1}(m)\big).\\
\end{align*}
\end{proof}
As a consequence of corollary(\ref{complete}), proposition(\ref{ini-cont}) and proposition (\ref{unique}) (cf. \cite{AHS} or \cite{JM}), we obtain 
 \begin{theorem}
 Let $\mathfrak C$ be an $\mathcal M$-complete category then the concrete category $(\mathfrak C_{\text{\tiny{$I$}}}, U)$ over $\mathfrak C$ is a topological category.
 \end{theorem}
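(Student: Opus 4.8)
The plan is to verify the usual characterisation of topologicity (see \cite{AHS} or \cite{JM}): the forgetful functor $U:\mathfrak C_{\text{\tiny{$I$}}}\to\mathfrak C$, which is faithful since $\mathfrak C_{\text{\tiny{$I$}}}$-morphisms are $\mathfrak C$-morphisms, is topological exactly when every $U$-structured source admits a unique $U$-initial lift. So I would fix an object $X$ of $\mathfrak C$, a (possibly large, possibly empty) family $\big((Y_\lambda,i_{\text{\tiny{$Y_\lambda$}}})\big)_{\lambda\in\Lambda}$ of objects of $\mathfrak C_{\text{\tiny{$I$}}}$, and a source $(f_\lambda:X\to Y_\lambda)_{\lambda\in\Lambda}$ in $\mathfrak C$. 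For each $\lambda$, Proposition \ref{ini-cont} supplies the interior operator $i_{\text{\tiny{$X_{f_\lambda}$}}}=f_\lambda^{-1}\centerdot i_{\text{\tiny{$Y_\lambda$}}}\centerdot (f_\lambda)_{*}$ making $f_\lambda$ $I$-continuous, and I would take as the initial lift $(X,i_{\text{\tiny{$X$}}})$ with
\[
i_{\text{\tiny{$X$}}}:=\bigvee_{\lambda\in\Lambda} i_{\text{\tiny{$X_{f_\lambda}$}}},
\]
the join being formed in the complete lattice $Int(X)$ of Corollary \ref{complete}. For $\Lambda=\emptyset$ this is the trivial interior operator $I_{\text{\tiny{$T$}}}$; for $\Lambda\neq\emptyset$ the proof of the lattice structure of $Int(\mathfrak C,\mathcal M)$ shows the join is computed pointwise, $i_{\text{\tiny{$X$}}}(m)=\bigvee_{\lambda} i_{\text{\tiny{$X_{f_\lambda}$}}}(m)$.

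I would first record an elementary reformulation of continuity: for any morphism $h:A\to B$ and any $i_{\text{\tiny{$A$}}}\in Int(A)$, $i_{\text{\tiny{$B$}}}\in Int(B)$, the map $h$ is $I$-continuous from $(A,i_{\text{\tiny{$A$}}})$ to $(B,i_{\text{\tiny{$B$}}})$ if and only if $h^{-1}\centerdot i_{\text{\tiny{$B$}}}\centerdot h_{*}\leqslant i_{\text{\tiny{$A$}}}$ in $Int(A)$. One direction is the computation already carried out in the proof of Proposition \ref{ini-cont} (evaluate $h^{-1}\centerdot i_{\text{\tiny{$B$}}}\centerdot h_{*}\leqslant i_{\text{\tiny{$A$}}}$ at $h^{-1}(n)$ and use the unit $n\leqslant h_{*}(h^{-1}(n))$ together with monotonicity of $i_{\text{\tiny{$B$}}}$ and $h^{-1}$); the other direction evaluates $I$-continuity at $h_{*}(m)$ and uses the counit $h^{-1}(h_{*}(m))\leqslant m$ with monotonicity of $i_{\text{\tiny{$A$}}}$. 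Given this, each $f_\lambda:(X,i_{\text{\tiny{$X$}}})\to(Y_\lambda,i_{\text{\tiny{$Y_\lambda$}}})$ is $I$-continuous because $i_{\text{\tiny{$X_{f_\lambda}$}}}\leqslant i_{\text{\tiny{$X$}}}$, and the same reformulation shows $i_{\text{\tiny{$X$}}}$ is the least interior operator on $X$ with that property.

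The substantial step is initiality. I would take $(Z,i_{\text{\tiny{$Z$}}})$ in $\mathfrak C_{\text{\tiny{$I$}}}$ and $g:Z\to X$ in $\mathfrak C$ with every $f_\lambda\centerdot g$ $I$-continuous, and prove $g^{-1}\centerdot i_{\text{\tiny{$X$}}}\centerdot g_{*}\leqslant i_{\text{\tiny{$Z$}}}$ (equivalently, by the reformulation, that $g$ is $I$-continuous). From $(f_\lambda\centerdot g)^{-1}=g^{-1}\centerdot f_\lambda^{-1}$ and $(f_\lambda\centerdot g)_{*}=(f_\lambda)_{*}\centerdot g_{*}$ (right adjoints compose), the hypothesis becomes $g^{-1}\centerdot i_{\text{\tiny{$X_{f_\lambda}$}}}\centerdot g_{*}\leqslant i_{\text{\tiny{$Z$}}}$ for every $\lambda$. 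Now the key point: since $\mathfrak C$ satisfies the conditions of Proposition \ref{rightadj}, the inverse-image map $g^{-1}:\widehat{\mathcal M/X}\to\widehat{\mathcal M/Z}$ has the right adjoint $g_{*}$ and therefore, by Proposition \ref{preserve}, preserves all joins. Hence, for $\Lambda\neq\emptyset$ and any $m\in\widehat{\mathcal M/X}$,
\[
\big(g^{-1}\centerdot i_{\text{\tiny{$X$}}}\centerdot g_{*}\big)(m)=g^{-1}\Big(\bigvee_{\lambda} i_{\text{\tiny{$X_{f_\lambda}$}}}\big(g_{*}(m)\big)\Big)=\bigvee_{\lambda}\big(g^{-1}\centerdot i_{\text{\tiny{$X_{f_\lambda}$}}}\centerdot g_{*}\big)(m)\leqslant i_{\text{\tiny{$Z$}}}(m),
\]
so $g$ is $I$-continuous; the case $\Lambda=\emptyset$ is immediate, since $g^{-1}$ also preserves the least subobject and $I_{\text{\tiny{$T$}}}$ is the least interior operator. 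The converse — that $I$-continuity of $g$ forces $I$-continuity of each $f_\lambda\centerdot g$ — is just closure of $I$-continuous morphisms under composition (the proposition preceding the definition of $\mathfrak C_{\text{\tiny{$I$}}}$).

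Finally I would dispatch uniqueness: if $i_{\text{\tiny{$X$}}}$ and $i_{\text{\tiny{$X$}}}'$ both make the source initial, applying initiality of each to $g=1_X$ shows $1_X$ is $I$-continuous from $(X,i_{\text{\tiny{$X$}}})$ to $(X,i_{\text{\tiny{$X$}}}')$ and back, whence $i_{\text{\tiny{$X$}}}'\leqslant i_{\text{\tiny{$X$}}}$ and $i_{\text{\tiny{$X$}}}\leqslant i_{\text{\tiny{$X$}}}'$ (because $1_X^{-1}$ is the identity of $\widehat{\mathcal M/X}$), so $i_{\text{\tiny{$X$}}}=i_{\text{\tiny{$X$}}}'$ by antisymmetry. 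Thus every structured source has a unique initial lift and $(\mathfrak C_{\text{\tiny{$I$}}},U)$ is topological over $\mathfrak C$. I expect the only genuine obstacle — beyond formal manipulation of the operators $f^{-1}$, $f_{*}$ and the $i_{\text{\tiny{$X$}}}$ — to be the join-preservation of $g^{-1}$ used in the initiality step; that is exactly why Proposition \ref{rightadj} was arranged to guarantee both adjoints of every inverse-image map, so the real content has been front-loaded into the hypotheses.
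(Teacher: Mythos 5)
Your argument is correct and follows essentially the route the paper intends: the paper's ``proof'' is only a one-line appeal to Corollary \ref{complete}, Proposition \ref{ini-cont} and Proposition \ref{unique} together with the criterion from \cite{AHS}/\cite{JM}, and what you have written is exactly the detailed verification those citations are standing in for (initial lift of an arbitrary structured source as the pointwise join of the single-morphism initial operators, with join-preservation of $g^{-1}$ doing the real work). No gap; you have simply made explicit what the paper leaves implicit.
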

 \subsection{Open subobjects}
 \begin{defi}
 An $\mathcal M$-subobject $m: M\rightarrow X$  is called $I$-open (in $X$) if it is isomorphic to its $I$-interior, that is: if $j_m: i_{\text{\tiny{$X$}}}(M) \rightarrow M$ is an isomorphism.
 \end{defi}
 The $I$-continuity condition (\ref{conti}) implies that $I$-openness is preserve by inverse images:
 \begin{prop}
 Let $f:X\rightarrow Y$ be a morphism in $\mathfrak C$. If $n$ is $I$-open in $Y$, then $f^{-1}(n)$ is $I$-open in $X$.
 \end{prop}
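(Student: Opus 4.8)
The plan is to reduce the statement to a two–line order argument exploiting the antisymmetry of $\leqslant$ on $\widehat{\mathcal M/X}$ together with the defining inequality (\ref{conti}) of $I$-continuity. First I would unwind the definition of $I$-openness: to say that $j_n\colon i_{\text{\tiny{$Y$}}}(N)\rightarrow N$ is an isomorphism is exactly to say that $i_{\text{\tiny{$Y$}}}(n)\cong n$, i.e. $i_{\text{\tiny{$Y$}}}(n)=n$ as elements of the poset $\widehat{\mathcal M/Y}$. Dually, proving that $f^{-1}(n)$ is $I$-open in $X$ amounts to establishing the equality $i_{\text{\tiny{$X$}}}\big(f^{-1}(n)\big)=f^{-1}(n)$ in $\widehat{\mathcal M/X}$, so the whole task is to verify two comparisons.

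Next I would split this equality into its two halves. The inequality $i_{\text{\tiny{$X$}}}\big(f^{-1}(n)\big)\leqslant f^{-1}(n)$ is immediate: it is just the contraction axiom $(I_1)$ applied to the $\mathcal M$-subobject $f^{-1}(n)$ of $X$. For the reverse inequality I would start from $n=i_{\text{\tiny{$Y$}}}(n)$ (this is the $I$-openness hypothesis), apply the order-preserving map $f^{-1}(-)$ to obtain $f^{-1}(n)=f^{-1}\big(i_{\text{\tiny{$Y$}}}(n)\big)$, and then invoke the $I$-continuity of $f$ — inequality (\ref{conti}) with $m=n$ — to get $f^{-1}\big(i_{\text{\tiny{$Y$}}}(n)\big)\leqslant i_{\text{\tiny{$X$}}}\big(f^{-1}(n)\big)$. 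Chaining these gives $f^{-1}(n)\leqslant i_{\text{\tiny{$X$}}}\big(f^{-1}(n)\big)$.

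Combining the two inequalities via antisymmetry of $\leqslant$ on $\widehat{\mathcal M/X}$ yields $i_{\text{\tiny{$X$}}}\big(f^{-1}(n)\big)=f^{-1}(n)$, which is the claim; phrased back in terms of morphisms, this equality says precisely that the comparison map $j_{f^{-1}(n)}\colon i_{\text{\tiny{$X$}}}\big(f^{-1}(N)\big)\rightarrow f^{-1}(N)$ is an isomorphism. There is essentially no obstacle to overcome here; the only points to keep straight are, first, that $f$ must be taken to be $I$-continuous (a morphism of $\mathfrak C_{\text{\tiny{$I$}}}$ from $(X,i_{\text{\tiny{$X$}}})$ to $(Y,i_{\text{\tiny{$Y$}}})$), since it is exactly the $I$-continuity inequality (\ref{conti}) that drives the argument, and second, that the passage between ``$I$-open'' and ``fixed by the interior operator'' is carried out at the level of the poset $\widehat{\mathcal M/X}$ rather than at the level of representing morphisms.
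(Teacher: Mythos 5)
Your argument is correct and is essentially the paper's own proof: the authors likewise combine the hypothesis $n\cong i_{\text{\tiny{$Y$}}}(n)$ with the $I$-continuity inequality (\ref{conti}) to get $f^{-1}(n)\leqslant i_{\text{\tiny{$X$}}}\big(f^{-1}(n)\big)$, and conclude $\cong$ using contraction $(I_1)$ implicitly. Your observation that $f$ must be taken $I$-continuous (the proposition's statement omits this, though the surrounding text makes it clear) is a fair and accurate clarification, not a deviation.
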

 \begin{proof}
 If $n\cong i_{\text{\tiny{$Y$}}}(n)$ then $f^{-1}(n)=f^{-1}\big(i_{\text{\tiny{$Y$}}}(n)\big)\leqslant i_{\text{\tiny{$X$}}}\big(f^{-1}(n)\big)$, \linebreak so $i_{\text{\tiny{$X$}}}\big(f^{-1}(n)\big)\cong f^{-1}(n)$.
 \end{proof}
Let $\mathcal M^{I}$ be the class of $I$-open $\mathcal M$-subobjects. The last proposition asserts that $\mathcal M^{I}$ is stable under pullback, therefore
 \begin{coro}
 If, for monomorphisms $m$ and $n$,\,\ $n\centerdot m$ is an $I$-open\linebreak $\mathcal M$-subobject, then $m$ is an $I$-open $\mathcal M$-subobject.
 \end{coro}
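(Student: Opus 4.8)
The plan is to deduce the Corollary from the preceding Proposition (the pullback‑stability of $I$‑openness), once we recognize $m$ itself as an inverse image of $n\centerdot m$. Write $m\colon M\to N$ and $n\colon N\to X$; since $\mathcal M$ is closed under composition, $n\centerdot m\in\widehat{\mathcal M/X}$, and the hypothesis is precisely that $n\centerdot m$ is $I$‑open in $X$. So it suffices to produce the morphism $n\colon N\to X$ in $\mathfrak C$ and exhibit $m$ as $n^{-1}(n\centerdot m)$.

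The one computation to carry out first is the elementary identity
\[
n^{-1}(n\centerdot m)\cong m \quad\text{in }\widehat{\mathcal M/N}.
\]
Because $\mathfrak C$ has $\mathcal M$-pullbacks, $n^{-1}(n\centerdot m)$ is obtained as a pullback of $n\centerdot m\colon M\to X$ along $n\colon N\to X$, and since $n$ is monic I would check that $M$, equipped with the morphisms $1_M\colon M\to M$ and $m\colon M\to N$, already is such a pullback: the relevant square commutes trivially, and for any cone $(p\colon T\to N,\ q\colon T\to M)$ with $(n\centerdot m)\centerdot q=n\centerdot p$ one has $m\centerdot q=p$ (cancelling the monomorphism $n$), so $q$ is the unique mediating morphism. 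Hence the inverse image of $n\centerdot m$ under $n$ is the subobject $m$.

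With this in hand, apply the preceding Proposition to the morphism $f=n$ and the $I$-open subobject $n\centerdot m$ of $X$: it yields that $n^{-1}(n\centerdot m)$ is $I$-open in $N$, and the isomorphism above transports this to $m$, which is exactly the claim; moreover, since $n^{-1}(n\centerdot m)\in\mathcal M$ and $\mathcal M$ is closed under composition with isomorphisms, $m$ is indeed an $\mathcal M$-subobject. I do not expect a genuine obstacle here: the only point worth care is that one should \emph{not} try to get $n^{-1}(n\centerdot m)\cong m$ from the earlier identity $f^{-1}(f(m))=m$ valid for $f\in\mathcal M$, since $n$ is only assumed to be a monomorphism and need not lie in $\mathcal M$; the direct pullback verification above sidesteps this and uses nothing beyond the existence of $\mathcal M$-pullbacks and the monicity of $n$.
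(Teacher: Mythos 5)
Your proof is correct and follows exactly the route the paper intends: the corollary is stated as an immediate consequence of the preceding proposition (pullback-stability of $I$-openness), via the identification $m\cong n^{-1}(n\centerdot m)$, which you verify directly using the monicity of $n$. Your explicit check of the pullback square and the remark about why one should not invoke $f^{-1}(f(m))=m$ are sound elaborations of what the paper leaves implicit.
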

 
 \section{Examples of Interior Operators}

 \subsection{Kuratowski interior operator}
The Kuratowski interior operator $I=(i_{\text{\tiny{$X$}}})_{X\in Sets}$ is described as follows (cf. \cite{Du}):
\begin{defi}
Let $X$ be a set and $ i_{\text{\tiny{$X$}}}:\wp(X)\rightarrow \wp(X)$ a map such that:
\begin{enumerate}
\item $i_{\text{\tiny{$X$}}}(X)=X$.
\item $i_{\text{\tiny{$X$}}}(A)\subseteq A$\qquad for all $A\in \wp(X)$.
\item  $i_{\text{\tiny{$X$}}}\centerdot i_{\text{\tiny{$X$}}}(A)=  i_{\text{\tiny{$X$}}}(A)$ \qquad for all $A\in \wp(X)$.
\item $i_{\text{\tiny{$X$}}}(A\cap B)=  i_{\text{\tiny{$X$}}}(A)\cap i_{\text{\tiny{$X$}}}(B)$\qquad for all $A,B\in \wp(X)$.
\end{enumerate}
\end{defi}
Then $\tau =\{A\in \wp(X)\mid i_{\text{\tiny{$X$}}}(A)=A\}$ is a topology on $X$.
 \subsection{Grothendieck interior operator}
 Let $ \mathfrak C$ be a small category, and let $\mathbf{Sets^{\mathfrak C^{op}}}$ be the corresponding functor category (cf. \cite{MM}). As usual, we write
 \[
 y:\mathfrak C\rightarrow  \mathbf{Sets^{\mathfrak C^{op}}}
 \]
 for the Yoneda embedding: $y(C)= Hom_{\mathfrak C}(-,C)$. Recall that 
 \begin{enumerate}
 \item A sieve $S$ on $C$ is a subobject $S\subseteq y(C)$ in $\mathbf{Sets^{\mathfrak C^{op}}}$. We write $Sub\ y(C)$ for the class of subobjects of $y(C)$.
 \item A sieve $S$ on $C$ is a right ideal of morphisms in $ \mathfrak C$, all with codomain $C$.
 \item If $S$ is a sieve on $C$ and $h: D\rightarrow C$ is any arrow to $C$, then $$h^{*}(S)=\{g\mid cod(g)=D\,\ h\centerdot g\in S\}$$ is a sieve on $D$.
 \item $t_{C}=\{f\mid cod(f)=C\}$ is the maximal sieve on $C$
 \end{enumerate}
 \begin{defi}
 An interior operator $I$ of the category $\mathfrak C$  is given by a family $I =(i_{\text{\tiny{$y(C)$}}})_{\text{$C\in \mathfrak C$}}$ of maps\,\ $i_{\text{\tiny{$y(C)$}}}:Sub\ y(C)\rightarrow Sub\ y(C)$ such that
 \begin{itemize}
 \item[($I_1)$] $\left(\text{Contraction}\right)$\,\  $i_{\text{\tiny{$y(C)$}}}(S)\leqslant S$ for all $S\in Sub\ y(C)$;
 \item[($I_2)$] $\left(\text{Monotonicity}\right)$\,\  If $S_1\leqslant S_2$ in $Sub\ y(C)$, then $i_{\text{\tiny{$y(C)$}}}(S_1)\leqslant i_{\text{\tiny{$y(C)$}}}(S_2)$
 \item[($I_3)$] $\left(\text{Upper bound}\right)$\,\  $i_{\text{\tiny{$y(C)$}}}(t_C)=t_C$.
 \end{itemize}
 \end{defi}
\begin{prop}
Suppose that  $\mathfrak C$ have $(\mathcal E, \mathcal M)$-factorization with $\mathcal M$-pullbacks, and $\mathcal E$ is stable under pullbacks. Then 
the function $J$ which assigns to each object $C$ of $\mathfrak C$ the collection $J(C)=\{S\mid S\,\ \text{is $I$-open}\}$ is a Grothendieck topology on $\mathfrak C$, whenever there exists an $\mathcal E$-morphisms  in each sieve $S$.
\end{prop}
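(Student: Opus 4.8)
The plan is to verify the three axioms of a Grothendieck topology for $J$ directly: (i) the maximal sieve $t_C$ belongs to $J(C)$; (ii) stability under pullback, i.e.\ if $S\in J(C)$ and $h:D\to C$ is any arrow, then $h^{*}(S)\in J(D)$; and (iii) the local character (transitivity) axiom. Axiom (i) is immediate from $(I_3)$, since $i_{\text{\tiny{$y(C)$}}}(t_C)=t_C$ means $t_C$ is $I$-open. Axiom (ii) is exactly the content of the proposition immediately preceding this one: $I$-openness is preserved by inverse images, and $h^{*}(S)$ is precisely the inverse image $h^{-1}(S)$ in $\mathbf{Sets^{\mathfrak C^{op}}}$ (or in $\mathfrak C$ via Yoneda), so $S$ being $I$-open forces $h^{*}(S)$ to be $I$-open.

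The real work is axiom (iii): given a sieve $S$ on $C$ and a covering sieve $R\in J(C)$ such that $h^{*}(S)\in J(\mathrm{dom}(h))$ for every $h\in R$, one must show $S\in J(C)$, i.e.\ $i_{\text{\tiny{$y(C)$}}}(S)\cong S$. First I would use $(I_1)$ to note $i_{\text{\tiny{$y(C)$}}}(S)\leqslant S$ always holds, so it suffices to prove $S\leqslant i_{\text{\tiny{$y(C)$}}}(S)$. The strategy is to write $S$ as a union (join in $Sub\,y(C)$) of the images of the sieves $h^{*}(S)$ over $h\in R$, using the hypothesis that there is an $\mathcal E$-morphism in each sieve — this is what lets one reconstruct $S$ from its pullbacks, since $\mathcal E$ is stable under pullbacks and an $\mathcal E$-morphism in $R$ guarantees that $R$ is ``effectively'' all of $y(C)$ on the relevant part. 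Then I would invoke the join-preservation and the interaction formulas for $f(-)$, $f^{-1}(-)$ from Section~1 (the adjointness $(\ref{adjo})$ and formula $(3)$ there, plus $f(f^{-1}(n))=n$ when $f\in\mathcal E$ and $\mathcal E$ is pullback-stable, from the Proposition on factorization systems) together with the $I$-continuity / monotonicity axioms to push the interior operator through the union and conclude $i_{\text{\tiny{$y(C)$}}}(S)\geqslant S$.

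Concretely, for each $h\in R$ with domain $D_h$ let $m_h: h^{*}(S)\to y(D_h)$ be the inclusion; the hypothesis $h^{*}(S)\in J(D_h)$ says $m_h\cong i_{\text{\tiny{$y(D_h)$}}}(m_h)$. Applying the Yoneda-induced morphism $h:y(D_h)\to y(C)$ one gets $h(m_h)\leqslant S$, and the existence of an $\mathcal E$-morphism in $R$ (hence, after pulling back, in each $h^{*}(S)$) is what makes $S=\bigvee_{h\in R} h(m_h)$. Then
\[
S=\bigvee_{h\in R} h\bigl(i_{\text{\tiny{$y(D_h)$}}}(m_h)\bigr)\leqslant \bigvee_{h\in R} h\bigl(i_{\text{\tiny{$y(D_h)$}}}(h^{-1}(S))\bigr),
\]
and using $I$-continuity of $h$ in the form $h^{-1}(i_{\text{\tiny{$y(C)$}}}(S))\leqslant i_{\text{\tiny{$y(D_h)$}}}(h^{-1}(S))$ — more precisely its adjoint transpose $h\bigl(i_{\text{\tiny{$y(D_h)$}}}(h^{-1}(S))\bigr)\leqslant i_{\text{\tiny{$y(C)$}}}(S)$, which holds because every representable morphism $h$ is trivially $I$-continuous for the $I$-open topology, or must be assumed — each term is $\leqslant i_{\text{\tiny{$y(C)$}}}(S)$, giving $S\leqslant i_{\text{\tiny{$y(C)$}}}(S)$ as desired.

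**The main obstacle** I anticipate is justifying the crucial identity $S=\bigvee_{h\in R} h(h^{*}(S))$ and, relatedly, the transpose inequality $h\bigl(i(h^{-1}(S))\bigr)\leqslant i(S)$: the first needs the ``$\mathcal E$-morphism in every sieve'' hypothesis to guarantee that the family $(h)_{h\in R}$ is jointly ``epic enough'' in the sense of the sink condition in Proposition~\ref{rightadj}(2), so that pulling $S$ back along $R$ and pushing forward recovers $S$ exactly rather than merely a subobject of it; the second is essentially a restatement of $I$-continuity of representable morphisms, which one either derives from the definition of $I$ on $\mathfrak C$ or reads off the contraction axiom. I would handle the first by carefully choosing, for each $h\in R$, the $\mathcal E$-morphism $e_h\in h^{*}(S)$ and checking that $\{h\cdot e_h\}$ already exhausts $S$ up to the join, and the second by transposing inequality $(\ref{conti})$ across the adjunction $h(-)\dashv h^{-1}(-)$ using Lemma~\ref{adj}(iv).
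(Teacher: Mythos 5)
Your axioms (i) and (ii) are right in outline, but already in (ii) your route diverges from what the paper can actually support: you invoke the earlier proposition that ``$I$-openness is preserved by inverse images,'' whose proof requires $h$ to be $I$-continuous, and $I$-continuity of $h$ is \emph{not} part of the data of a Grothendieck interior operator (the definition only supplies the individual maps $i_{\text{\tiny{$y(C)$}}}$, with no compatibility condition across objects). The paper avoids this by equipping $y(D)$ with the \emph{initial} operator $i_{\text{\tiny{$y(D)_{h}$}}}=h^{*}\centerdot i_{\text{\tiny{$y(C)$}}}\centerdot h_{*}$ and checking openness of $h^{*}(S)$ with respect to that operator, using only $h_{*}h^{*}(S)\geqslant S$. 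Your version silently assumes exactly the continuity you would need to prove.

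The genuine gap is in axiom (iii). Your pivotal identity $S=\bigvee_{h\in R}h\bigl(h^{*}(S)\bigr)$ is false: in $\mathbf{Sets^{\mathfrak C^{op}}}$ the image $h(h^{*}(S))$ is the sieve of those $f\in S$ that factor through $h$, so the join over $h\in R$ is $S\wedge R$, not $S$, and nothing in the hypotheses forces $S\leqslant R$. The ``$\mathcal E$-morphism in every sieve'' hypothesis does not repair this; it is not a joint-epimorphicity condition on the whole sink $(h)_{h\in R}$. Likewise the transposed inequality $h\bigl(i_{\text{\tiny{$y(D_h)$}}}(h^{-1}(S))\bigr)\leqslant i_{\text{\tiny{$y(C)$}}}(S)$ is again the unproved $I$-continuity of $h$. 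You flag both points as obstacles but do not close them, so the argument does not go through as written. The paper's proof works entirely differently and uses the hypothesis pointwise on a \emph{single} morphism: choose one $\mathcal E$-morphism $g$ in the covering sieve; pullback-stability of $\mathcal E$ gives $g_{*}\centerdot g^{*}\cong \mathrm{id}$ on subobjects of $y(C)$; then the hypothesis that $g^{*}$ of the given sieve is open, combined with the formula $i_{\text{\tiny{$y(D)$}}}=g^{*}\centerdot i_{\text{\tiny{$y(C)$}}}\centerdot g_{*}$, collapses in a three-step chain to the required isomorphism with the interior. In short, the $\mathcal E$-morphism makes $g^{*}$ a retraction on subobject lattices so that openness can be transported back along one arrow; it is not used to reconstruct the sieve as a join over the whole cover.
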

\begin{proof}\
\begin{enumerate}
\item Clearly, $t_C \in J(C)$.
\item Suppose that $S\in J(C)$ and $h:D\rightarrow C$ is any arrow to $C$. Then for 
\[
i_{\text{\tiny{$y(D)_{h}$}}}= h^{*}\centerdot i_{\text{\tiny{$y(C)$}}}\centerdot h_{*},
\]
we have 
\[
i_{\text{\tiny{$y(D)_{h}$}}}\big( h^{*}(S) \big) =h^{*}\centerdot i_{\text{\tiny{$y(C)$}}}\centerdot h_{*}\big( h^{*}(S) \big)
\geqslant h^{*}\centerdot i_{\text{\tiny{$y(C)$}}}\centerdot (S) =h^{*}(S),
\]
consequently, $h^{*}(S)\in J(D)$.
\item Let $S$ be in $J(C)$, and let $R$ be any sieve on $C$ such that $h^{*}(R)\in J(D)$ for all $h: D\rightarrow C$ in $S$. Since there exists an $\mathcal E$-morphisms $g$  in $S$, and since $g_{*}\centerdot g^{*}(R)\cong R$, it follows that 
\[
R\cong g_{*} \left( g^{*}(R)\right)\cong  g_{*}\left(i_{\text{\tiny{$y(D)$}}}\left( g^{*}(R)\right)\right)=g_{*}\left(g^{*}\centerdot i_{\text{\tiny{$y(C)$}}}\centerdot g_{*}\left( g^{*}(R)\right)\right)\cong i_{\text{\tiny{$y(C)$}}}(R).
\]
\end{enumerate}
\end{proof} 
 \subsection{Interior operators on Grothendieck topos}
 Recall that a Grothen\-dieck topos is a category which is equivalent to the category $Sh(\mathfrak C,J)$ of sheaves on some site $(\mathfrak C,J)$ (cf. \cite{MM}). Furthermore, for any sheaf $E$ on a site $(\mathfrak C,J)$, the lattice $Sub\ (E)$ of all subsheaves of $E$ is a complete Heyting algebra. It is also true that any morphism $\phi:E\rightarrow F$ of sheaves on a site induces a functor on the corresponding partially ordered sets of subsheaves,
 \begin{equation}
 \phi^{-1}: Sub(F)\rightarrow Sub(E)
 \end{equation}
 by pullback. Moreover, this functor has both a left and a right adjoint:
 \begin{equation}
 \exists_{\phi} \dashv \phi^{-1}\dashv \forall_{\phi}.
 \end{equation}
 \begin{defi}
 An interior operator $I$ of the category $Sh(\mathfrak C,J)$ of sheaves on some site $(\mathfrak C,J)$  is given by a family $I =(i_{\text{\tiny{$E$}}})_{\text{$E\in Sh(\mathfrak C,J)$}}$ of maps\linebreak $i_{\text{\tiny{$E$}}}:Sub\ E\rightarrow Sub\ E$ such that
 \begin{itemize}
 \item[($I_1)$] $\left(\text{Contraction}\right)$\,\  $i_{\text{\tiny{$E$}}}(A)\leqslant A$ for all $A\in Sub\ E$;
 \item[($I_2)$] $\left(\text{Monotonicity}\right)$\,\  If $A\leqslant B$ in $Sub\ E$, then $i_{\text{\tiny{$E$}}}(A)\leqslant i_{\text{\tiny{$E$}}}(B)$
 \item[($I_3)$] $\left(\text{Upper bound}\right)$\,\  $i_{\text{\tiny{$E$}}}(E)=E$.
 \end{itemize}
 \end{defi}
 As a consequence we have a category $Sh(\mathfrak C,J){I}$ whose objects are pairs $(E, i_{\text{\tiny{$E$}}})$ where $E$ is a sheave on the site $(\mathfrak C,J)$, and whose morphisms are morphisms of $Sh(\mathfrak C,J)$ which are $I$-continuous; i. e.,  morphisms $\phi:E\rightarrow F$ such that
  $$\phi^{-1}\left( i_{\text{\tiny{$F$}}}(B)\right)\leqslant i_{\text{\tiny{$E$}}}\left( \phi^{-1}(B)\right)$$ 
  for all $B\in Sub\ E$.\\
  Given an object $(F, i_{\text{\tiny{$F$}}})$ of $Sh(\mathfrak C,J){I}$ and a morphism $\phi:E\rightarrow F$ of $Sh(\mathfrak C,J){I}$, defining $$i_{\text{\tiny{$E_{\phi}$}}}:=\phi^{-1}\centerdot i_{\text{\tiny{$F$}}}\centerdot \forall_{\phi},$$ it is clear that 
\begin{prop}
 The concrete category $(Sh(\mathfrak C,J)_{I}, U)$ over $Sh(\mathfrak C,J)$ is a topological category.
 \end{prop}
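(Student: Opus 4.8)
The plan is to check that the concrete functor $U\colon Sh(\mathfrak C,J)_{I}\rightarrow Sh(\mathfrak C,J)$ admits an initial lift for every $U$-structured source; by the standard characterization of topological categories (cf.\ \cite{AHS}, \cite{JM}) this is exactly what must be established. First I would record that no size obstruction occurs: for a Grothendieck topos each $Sub\ E$ is a \emph{small} complete Heyting algebra, so $Int(E)$, being a subcollection of the self-maps of $Sub\ E$, is a set, and the verbatim argument of Corollary \ref{complete} shows that $Int(E)$ is a complete lattice, whose bottom element is the trivial operator $I_{T}$.

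Next I would observe that, although $Sh(\mathfrak C,J)$ need not fulfil all the hypotheses of Proposition \ref{rightadj} (there is no need for a projective generator), it does carry the only feature those arguments actually use: for every morphism $\phi\colon E\rightarrow F$ the map $\phi^{-1}\colon Sub\ F\rightarrow Sub\ E$ has a left adjoint $\exists_{\phi}$ and a right adjoint $\forall_{\phi}$. Hence the constructions of Subsection 2.2 carry over: for an object $(F,i_{F})$ of $Sh(\mathfrak C,J)_{I}$ and $\phi\colon E\rightarrow F$, the operator $i_{E_{\phi}}:=\phi^{-1}\centerdot i_{F}\centerdot\forall_{\phi}$ is an interior operator on $E$ (the proof of Proposition \ref{ini-cont} uses only $\phi^{-1}\dashv\forall_{\phi}$), $\phi$ is $I$-continuous as a morphism $(E,i_{E_{\phi}})\rightarrow(F,i_{F})$, and $i_{E_{\phi}}$ is the coarsest such operator (Proposition \ref{unique}).

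Now, given a $U$-structured source $\big(\phi_{\lambda}\colon E\rightarrow(F_{\lambda},i_{F_{\lambda}})\big)_{\lambda\in\Lambda}$, I would set
\[
i_{E}:=\bigvee_{\lambda\in\Lambda} i_{E_{\phi_{\lambda}}}=\bigvee_{\lambda\in\Lambda}\big(\phi_{\lambda}^{-1}\centerdot i_{F_{\lambda}}\centerdot\forall_{\phi_{\lambda}}\big),
\]
the join being taken in the complete lattice $Int(E)$; for $\Lambda=\emptyset$ this reads $i_{E}=I_{T}$, which does lift the empty source, since in a topos $\phi^{-1}$ sends the least subobject to the least subobject and preserves the top, so every morphism of $Sh(\mathfrak C,J)$ with codomain $(E,I_{T})$ is automatically $I$-continuous. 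Since enlarging the interior operator on the domain preserves $I$-continuity, each $\phi_{\lambda}\colon(E,i_{E})\rightarrow(F_{\lambda},i_{F_{\lambda}})$ is $I$-continuous. For initiality, let $(D,i_{D})$ be an object and $\psi\colon D\rightarrow E$ a morphism of $Sh(\mathfrak C,J)$ such that each $\phi_{\lambda}\centerdot\psi$ is $I$-continuous; then for every $A\in Sub\ E$, using that $\psi^{-1}$ preserves joins (it is a left adjoint of $\forall_{\psi}$), the $I$-continuity of each $\phi_{\lambda}\centerdot\psi$, the monotonicity of $i_{D}$, and the counit inequality $\phi_{\lambda}^{-1}(\forall_{\phi_{\lambda}}(A))\leqslant A$, one gets
\begin{align*}
\psi^{-1}\big(i_{E}(A)\big)&=\bigvee_{\lambda\in\Lambda}(\phi_{\lambda}\centerdot\psi)^{-1}\big(i_{F_{\lambda}}(\forall_{\phi_{\lambda}}(A))\big)\\
&\leqslant\bigvee_{\lambda\in\Lambda}i_{D}\big((\phi_{\lambda}\centerdot\psi)^{-1}(\forall_{\phi_{\lambda}}(A))\big)=\bigvee_{\lambda\in\Lambda}i_{D}\big(\psi^{-1}(\phi_{\lambda}^{-1}(\forall_{\phi_{\lambda}}(A)))\big)\leqslant i_{D}\big(\psi^{-1}(A)\big),
\end{align*}
so $\psi$ is $I$-continuous. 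Uniqueness of the initial lift is automatic because the fibres of $U$ are partially ordered. This proves that $(Sh(\mathfrak C,J)_{I},U)$ is topological.

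The step I expect to be the main obstacle is the initiality computation: one must ensure that the join defining $i_{E}$ genuinely commutes with $\psi^{-1}$, which is why it is essential that in a topos $\psi^{-1}$ is simultaneously a left and a right adjoint, and one must separately dispatch the empty-source case, where $i_{E}$ is forced to be the trivial operator $I_{T}$ and the claim reduces to checking that $\phi^{-1}$ preserves the least and the greatest subobjects.
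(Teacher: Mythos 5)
Your proof is correct and follows the same route the paper intends: the pointwise-complete fibres $Int(E)$ together with the initial operators $i_{E_{\phi}}=\phi^{-1}\centerdot i_{F}\centerdot\forall_{\phi}$ yield initial lifts of arbitrary $U$-structured sources as joins, which is exactly the argument behind the earlier theorem for $\mathfrak C_{I}$ that the authors invoke here. The paper itself offers no proof beyond ``it is clear that,'' so your write-up merely supplies the details the authors omit --- in particular the join over a whole source, the initiality computation, and the empty-source case.
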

 \subsection{Interior operators on the category $LF$-Top of fuzzy topological spaces}
  Given a $GL$-monoid $(L,\leqslant, \otimes)$ (for example, a complete Heyting algebra or a complete $MV$- algebra), for any set $X$ (cf:\cite{HS}),
 \begin{defi}
 A mapping $\mathcal{I}: L^{X}\times L\to L^{X}$ is called an $L$-fuzzy interior operator on $X$ if and only if $\mathcal{I}$ satisfies the following conditions:
 \begin{itemize}
 \item[($I_1)$] $\mathcal{I}(1_X,\alpha)= 1_X$, \,\ for all $\alpha\in L$.
 \item[($I_2)$] $\mathcal{I}(g,\beta)\leqslant \mathcal{I}(f,\alpha)$ whenever $g\leqslant f$ and $\alpha\leqslant \beta$.
 \item[($I_3)$] $\mathcal{I}(f,\alpha)\otimes \mathcal{I}(g,\beta)\leqslant \mathcal{I}(f\otimes g,\alpha\otimes\beta)$.
 \item[($I_4)$] $\mathcal{I}(f,\alpha)\leqslant f$.
 \item[($I_5)$] $\mathcal{I}(f,\alpha)\leqslant \mathcal{I}\big(\mathcal{I}(f,\alpha)\big)$.
 \item[($I_6)$] $\mathcal{I}(f,\bot)=f$.
 \item[($I_7)$] If $\emptyset\ne K\subseteq L$ and $\mathcal{I}(f,\alpha)=f^{0}$, then $\mathcal{I}(f,\bigvee K)=f^{0}$.
 \end{itemize}
 \end{defi}
 Given an $L$-fuzzy interior operator   $\mathcal{I}: L^{X}\times L\to L^{X}$, the formula
 \[
 \mathcal T_{\mathcal I}(f)= \bigvee\{\alpha \in L\mid f\leqslant \mathcal{I}(f,\alpha)\},\,\,\,\ f\in L^{X},
 \]
 defines an $L$-fuzzy topology $\mathcal T_{\mathcal I}:L^{X}\rightarrow L$ on $X$.

\end{document}